\let\oldmarginpar\marginpar
\renewcommand\marginpar[1]{\-\oldmarginpar[\raggedleft\footnotesize #1]%
{\raggedright\footnotesize #1}}
\newtheorem{theorem}{Theorem}[section]
\newtheorem{defi}[theorem]{Definition}
\newtheorem{lemma}[theorem]{Lemma}
\newtheorem{corollary}[theorem]{Corollary}
\newtheorem{question}[theorem]{Question}
\newtheorem{rem}[theorem]{Remark}
\newtheorem{ex}[theorem]{Example}
\newtheorem{CSP}[theorem]{CSP}
\newcommand{\N}{\mathbb{N}}
\title{Finite nilpotent semigroups of small coclass}
\author{Andreas Distler}
\thanks{The author thanks Bettina Eick and James D.~Mitchell for
  comments on earlier versions of the paper. The work was developed
  with support from the doctoral program of the University of St
  Andrews and from the project PTDC/MAT/101993/2008 of Centro de
  \'Algebra da Universidade de Lisboa, financed by FCT and FEDER}
\begin{document}

\begin{abstract}
The parameter coclass has been used successfully in the study of
nilpotent algebraic objects of different kinds. In this paper a
definition of coclass for nilpotent semigroups is introduced and semigroups
of coclass 0, 1, and 2 are classified. Presentations for all such
semigroups and formulae for their numbers are obtained. The
classification is provided up to isomorphism as well as up to
isomorphism or anti-isomorphism. Commutative and self-dual semigroups
are identified within the classification.
\end{abstract}
\maketitle

\section{Introduction}
\noindent
Nilpotency is an important concept in many areas of algebra. For
semigroups there exist two common definitions; on the one hand the
generalisation of nilpotency for groups introduced by
Mal$'$cev~\cite{Mal53}, and on the other hand a 
natural adaptation of the notion for algebras. The latter is used in
this paper: a semigroup $S$ is \emph{nilpotent} if there exists $c
\in \N_0$ such that $|S^{c+1}|=1$, the least such $c$ is the
\emph{(nilpotency) class} of $S$.

A parameter used successfully in the studies of nilpotent groups and
Lie algebras is the coclass of such objects~\cite{LN80, SZ97}. For a
finite nilpotent semigroup $S$ of class $c$ we define the
\emph{(nilpotency) coclass}  of $S$ to be $|S|-1-c$. 

It is immediate from the definition that every nilpotent semigroup $S$
contains a zero and that $|S|-1$ is an upper bound for both class and
coclass. Throughout the paper attributing class or coclass to a
semigroup $S$ shall imply that $S$ is nilpotent.  

The above definition of class and coclass is in parallel to that for other
algebraic structures. It ensures for all $k\in \N$ that
the class of a nilpotent semigroup $S$ equals the sum of the classes
of the ideal $S^k$ and the quotient $S/S^k$.
Furthermore class and coclass of $S$ equal the respective attributes of
the nilpotent algebra naturally associated with $S$ over any given field,
that is the contracted semigroup algebra of $S$ (compare~\cite{DE12}).

The main results obtained in this paper are complete classifications of
nilpotent semigroups of coclass 1 and of coclass 2. These demonstrate that
coclass is a useful parameter for the classification, seemingly better
suited than the more directly defined nilpotency class.

The forthcoming section contains technical background on basic properties
of nilpotent semigroups. An immediate consequence is the description
of semigroups of coclass $0$ in Lemma~\ref{lem_nil_mono}. In
Sections~\ref{sec_cc1} and~\ref{sec_cc2} the main results of the paper
are obtained. Classifications of semigroups of coclass $1$ respectively $2$
are given in Theorem~\ref{thm_cc1} respectively Theorems~\ref{thm_cc2_d2}
and~\ref{thm_cc2_d3}. Presentations of the semigroups are provided in all
cases. Ideas and problems for an extension of the results to higher coclass
are briefly discussed at the end of Section~\ref{sec_cc2}.
In the final section the main results are applied to 
the enumeration of semigroups of coclass $1$ and $2$. Formulae for the
numbers of such semigroups are given along with a table containing
the numbers for small orders. 

The lists of presentations from the main theorems have been implemented in
\textsc{GAP}~\cite{GAP4} by the author. The implementation is available in the
package \textsc{Smallsemi}~\cite{smallsemi} and can be accessed through the
function
\texttt{PresentationsOfNilpotentSemigroups}\footnote{Currently under
  development}.

\section{Preliminaries and Coclass 0}
\label{sec_nil_rank}
\noindent
This section contains basic results about the structure of nilpotent
semigroups and a characterisation of semigroups with coclass $0$.

\begin{lemma}
\label{lem_part_nil}
Let $S$ be a nilpotent semigroup of class $c$. Then the following hold:
\begin{enumerate}
\item 
the sets $S^k\setminus S^{k+1}$ with $1 \leq k \leq c$ are non-empty
and form a partition of $S\setminus S^{c+1}$;
\item
if $s= s_1s_2\cdots s_k \in
S^k\setminus S^{k+1}$ with $1\leq k\leq c$, then $s_i\cdots
s_j \in S^{j-i+1}\setminus S^{j-i+2}$ for all $1\leq i \leq j \leq k$;
\item
if $|S^l \setminus S^{l+1}|=1$ for $1 \leq l \leq c$ then $|S^k
\setminus S^{k+1}|=1$ for all $l \leq k \leq c$.
\end{enumerate}
\end{lemma}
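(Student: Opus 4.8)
The plan is to exploit the single structural fact that $S=S^1\supseteq S^2\supseteq\cdots\supseteq S^{c+1}$ is a descending chain with $|S^{c+1}|=1$, together with the elementary observation that $S^{m+1}=S\cdot S^m=S^m\cdot S$. For part~(i), disjointness and the fact that these sets cover $S\setminus S^{c+1}$ are immediate from the chain being nested: the differences $S^k\setminus S^{k+1}$ telescope to $S^1\setminus S^{c+1}$, and an element of $S^k\setminus S^{k+1}$ lies in $S^{j+1}$ whenever $j<k$, hence in no other layer. The only real content is non-emptiness, which I would prove by contradiction: if $S^k=S^{k+1}$ for some $k\le c$, then multiplying repeatedly by $S$ forces $S^k=S^{k+1}=\cdots=S^{c+1}$, so $|S^k|=1$; this makes the class at most $k-1<c$, contradicting the minimality of $c$.

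Part~(ii) is a counting-of-factors argument. The containment $s_i\cdots s_j\in S^{j-i+1}$ is automatic, since it is a product of $j-i+1$ elements. For the reverse exclusion I would argue by contradiction: if $s_i\cdots s_j\in S^{j-i+2}$, rewrite this subproduct as a product of $j-i+2$ elements and substitute it back into $s=s_1\cdots s_{i-1}(s_i\cdots s_j)s_{j+1}\cdots s_k$. Counting factors gives $(i-1)+(j-i+2)+(k-j)=k+1$, so $s\in S^{k+1}$, contradicting $s\in S^k\setminus S^{k+1}$.

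Part~(iii) is the crux and the step I expect to be the main obstacle, since it must turn a local uniqueness at level $l$ into uniqueness at every higher level. I would induct on $k$, the base case $k=l$ being the hypothesis; it then suffices to show that if $S^k\setminus S^{k+1}=\{a\}$ is a singleton with $l\le k\le c-1$, then so is $S^{k+1}\setminus S^{k+2}$, which is non-empty by part~(i). The key device is to fix one factorization $a=a_1\cdots a_k$ and to feed part~(ii) back in: given any $x\in S^{k+1}\setminus S^{k+2}$, write $x=x_1\cdots x_{k+1}$, so that part~(ii) forces $x_1\cdots x_k\in S^k\setminus S^{k+1}=\{a\}$ and hence $x=a\,x_{k+1}=a_1\cdots a_k\,x_{k+1}$. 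Applying part~(ii) once more to this length-$(k+1)$ product shows $a_2\cdots a_k\,x_{k+1}\in S^k\setminus S^{k+1}=\{a\}$, so that $x=a_1\,(a_2\cdots a_k\,x_{k+1})=a_1a$. Since the element $a_1a$ is fixed independently of $x$, every element of $S^{k+1}\setminus S^{k+2}$ equals $a_1a$, giving $|S^{k+1}\setminus S^{k+2}|=1$. The point to watch is that each application of part~(ii) is legitimate only because the displayed products genuinely have length $k+1$ and the value $x$ lies outside $S^{k+2}$; once this bookkeeping is in place, the collapse to the single element $a_1a$ is forced.
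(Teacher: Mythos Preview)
Your proposal is correct and follows essentially the same approach as the paper: the telescoping/contradiction argument for (i), the substitution-and-factor-counting contradiction for (ii), and the ``sliding window'' use of (ii) for (iii) all match. The only difference is organizational: in (iii) the paper works directly at the fixed level $l$, repeatedly replacing a length-$l$ subword to obtain $s=t_1^{\,k-l+1}t_2\cdots t_l$, whereas you package the same replacement trick as an induction from level $k$ to level $k+1$; the underlying mechanism is identical.
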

\begin{proof}
(i): For any three sets $A,B,C$ with $A\supseteq
B\supseteq C$ the set $A\setminus C$ equals the disjoint union of
$A\setminus B$ and $B\setminus C$. Hence it suffices to show that the
sets $S^{k}\setminus S^{k+1}$ are non-empty for $1\leq k \leq c$. From
$S^{k}= S^{k+1}$ it would follow that $S^{k}=S^{c+1}$ contains just
one element and $c=k-1$, a contradiction to $k\leq c$. Thus 
$S^{k+1}$ is a proper subset of $S^{k}$ for $1\leq k \leq c$.

(ii): The statement is shown by contradiction. Assume that $s_i\cdots
s_j\in S^{j-i+2}$ for some $i,j$ with $1\leq i \leq j \leq k$. This means
$s_i\cdots s_j$ can be expressed as a product $t_1\cdots t_{j-i+2} \in
S^{j-i+2}$. Replacing $s_i\cdots s_j$ by $t_1\cdots t_{j-i+2}$ in $s$,
that is
\[
s=s_1s_2\cdots s_k = s_1\cdots s_{i-1}t_1\cdots t_{j-i+2}s_{j+1}\cdots s_k,
\]
yields $s \in S^{k+1}$, a contradiction.

(iii): Let $s=s_1s_2\cdots s_k\in S^k\setminus S^{k+1}$ for some $l < k
\leq c$. According to Part (ii) the product
$s_1s_2\cdots s_l$ of length $l$ equals the unique element
$t_1t_2\cdots t_l$ in $S^l \setminus S^{l+1}$. Hence $s_1s_2\cdots s_k =
t_1t_2\cdots t_l s_{l+1}\cdots s_k$. The right hand side of this
equation is still a product of length $k$ equalling $s$. Thus, again by
Part (ii) $t_2t_3\cdots t_ls_{l+1}$ equals the unique element in $S^l
\setminus S^{l+1}$ and can be replaced by $t_1t_2\cdots t_l$. Applying
this argument repeatedly and always replacing the product of length
$l$ with $t_1t_2\cdots t_l$ yields $s=t_1^{k-l+1}t_2\cdots t_l$. This
implies $|S^k\setminus S^{k+1}|=1$, since $s \in S^k\setminus S^{k+1}$
was chosen arbitrarily.
\end{proof}

The previous lemma will be used repeatedly  and it is worthwhile to
gain some intuition for
its content. First of all, for each element other than the zero, the
length of a product equalling the element is restricted. An element in
$S^k \setminus S^{k+1}$ can be written as product of length $k$,
but not as product of length $k+1$. 
We obtain a partition of $S$ if we collect all elements with the same
maximal length of a product equalling the element in a separate part.
A semigroup of
class $c$ is partitioned into $c+1$ sets: one for each maximal length
between 1 and $c$ and the zero element in a set by itself. This yields
precisely the partition in Lemma~\ref{lem_part_nil}(i).
Further, each part of a product of maximal length is
clearly maximal itself, which is essentially what is stated in the
second part of Lemma~\ref{lem_part_nil}. It follows in particular 
that every element in a product of maximal length lies in $S\setminus S^2$.
That in addition each element in $S\setminus S^2$ clearly has to appear
in every generating set, yields the following well-known result. 

\begin{lemma}
\label{lem_gen}
Let $S$ be a nilpotent semigroup containing at least $2$ elements. Then
$S\setminus S^2$ is the unique minimal generating set of $S$.
\end{lemma}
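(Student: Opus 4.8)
The plan is to establish the two requirements of a unique minimal generating set: first that $S \setminus S^2$ generates $S$, and second that every generating set must contain $S \setminus S^2$. Together these force $S \setminus S^2$ to be a generating set that is contained in every other, hence the unique minimal one.

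First I would show that $S \setminus S^2$ generates $S$. Take any nonzero element $s \in S$; by Lemma~\ref{lem_part_nil}(i) it lies in some $S^k \setminus S^{k+1}$ with $1 \leq k \leq c$, so it can be written as a product $s = s_1 s_2 \cdots s_k$ of length $k$. I claim each factor $s_i$ lies in $S \setminus S^2$. Indeed, applying Lemma~\ref{lem_part_nil}(ii) with $i = j$ gives $s_i \in S^1 \setminus S^2 = S \setminus S^2$ for every $i$. Thus every nonzero element is a product of elements of $S \setminus S^2$. The zero element is handled separately: since $S$ has at least two elements it is nilpotent of class $c \geq 1$, so the zero lies in $S^{c+1}$ and is expressible as a product of length $c+1$ of elements from $S \setminus S^2$ (e.g.\ extend any length-$c$ product by one more generator, landing in $S^{c+1}$). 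Hence $\genset{S \setminus S^2} = S$.

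Next I would argue necessity: every generating set $X$ of $S$ must satisfy $X \supseteq S \setminus S^2$. Suppose some $x \in S \setminus S^2$ were not in $X$. As the phrase in the text anticipates, any element of $X \subseteq S$ that is itself a nontrivial product lies in $S^2$; more precisely, if $x$ is generated by $X$ but $x \notin X$, then $x$ is a product of at least two elements of $X$, placing $x \in S^2$, which contradicts $x \in S \setminus S^2$. So every element of $S \setminus S^2$ must already belong to $X$, giving $S \setminus S^2 \subseteq X$.

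The conclusion then follows immediately: $S \setminus S^2$ is itself a generating set, and it is contained in every generating set, so it is both minimal and the unique minimal generating set. I expect the only delicate point to be the bookkeeping for the zero element in the generation step---ensuring it too is captured as a product of generators rather than needing to be listed separately---but since the class $c \geq 1$ guarantees $S^{c+1} = \{0\}$ is reached by products of the generators, this causes no real obstacle. The substantive content is entirely carried by Lemma~\ref{lem_part_nil}(ii), which localizes maximality to each factor and thereby pins down $S \setminus S^2$ as exactly the set of ``indecomposable'' nonzero elements.
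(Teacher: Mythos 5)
Your proof is correct and takes essentially the same route as the paper, which states the lemma as well-known and justifies it by the preceding discussion: Lemma~\ref{lem_part_nil}(ii) shows every factor of a maximal-length product lies in $S\setminus S^2$ (so this set generates), and every element of $S\setminus S^2$ must appear in any generating set. Your write-up merely formalizes that sketch, including the minor bookkeeping for the zero element that the paper leaves implicit.
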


We shall see that the previous lemma together with Lemma~\ref{lem_part_nil}(i)
yields that a semigroup with coclass $0$ can be generated by one 
element. A finite semigroup generated by a single element $u$ which satisfies
$u^k=u^{k+l}$ for minimal $k,l\in\N$ is called \emph{monogenic} of 
\emph{index} $k$ and \emph{period} $l$.

\begin{lemma}
\label{lem_nil_mono}
Let $S$ be a semigroup of order $n\in\N$.
Then the following statements are equivalent:
\begin{enumerate}
\item $S$ is nilpotent of coclass $0$;
\item $S$ is monogenic and nilpotent;
\item $S$ is monogenic with period $1$;
\item $\langle u\mid u^n=u^{n+1}\rangle$ is a presentation for $S$.
\end{enumerate}
\end{lemma}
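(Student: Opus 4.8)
The plan is to prove the four conditions equivalent by running through the cycle (i) $\Rightarrow$ (ii) $\Rightarrow$ (iii) $\Rightarrow$ (iv) $\Rightarrow$ (i). For (i) $\Rightarrow$ (ii), suppose $S$ is nilpotent of coclass $0$ and class $c$, so that $c=n-1$. By Lemma~\ref{lem_part_nil}(i) the $c$ sets $S^k\setminus S^{k+1}$ with $1\le k\le c$ are non-empty and partition $S\setminus S^{c+1}$, which has size $n-1=c$. First I would observe that $c$ non-empty sets partitioning a set of size $c$ must each be singletons; in particular $|S\setminus S^2|=1$. By Lemma~\ref{lem_gen} the set $S\setminus S^2$ is the unique minimal generating set, so $S$ is generated by a single element, hence monogenic, and it is nilpotent by hypothesis.

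For (ii) $\Rightarrow$ (iii), let $S=\langle u\rangle$ be monogenic and nilpotent of class $c$. Since $u^{c+1}$ is a product of $c+1$ elements it lies in $S^{c+1}=\{0\}$, so $u^{c+1}=0$. As $0$ is absorbing, $u^{c+1}=0=0\cdot u=u^{c+2}$, so $u$ satisfies the relation $u^{c+1}=u^{c+2}$, with difference one between the exponents. Since the period of a monogenic semigroup divides any such difference of exponents yielding equal powers, the period divides $1$ and is therefore $1$.

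For (iii) $\Rightarrow$ (iv), I would use that a monogenic semigroup of index $k$ and period $1$ consists of exactly the $k$ distinct elements $u,u^2,\dots,u^k$; since $|S|=n$ this forces $k=n$, and the defining relation $u^k=u^{k+1}$ becomes $u^n=u^{n+1}$. Thus $\langle u\mid u^n=u^{n+1}\rangle$ presents $S$. Finally, for (iv) $\Rightarrow$ (i), the semigroup with this presentation has underlying set $\{u,u^2,\dots,u^n\}$ with $u^n$ absorbing, and a direct computation gives $S^k=\{u^k,\dots,u^n\}$ for $1\le k\le n$. Hence $|S^n|=1$ while $|S^{n-1}|=2$ (for $n\ge 2$; the case $n=1$ is immediate), so the class equals $n-1$ and the coclass equals $n-1-(n-1)=0$.

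I expect the only genuinely delicate point to be the counting step in (i) $\Rightarrow$ (ii): the whole equivalence hinges on translating the numerical condition ``coclass $0$'', that is $c=n-1$, into the statement that the partition furnished by Lemma~\ref{lem_part_nil}(i) is as fine as possible, so that $S\setminus S^2$ is a singleton and Lemma~\ref{lem_gen} can be invoked. The remaining implications are routine manipulations with the standard structure of monogenic semigroups and should present no obstacle.
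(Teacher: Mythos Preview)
Your proof is correct and follows essentially the same cycle of implications as the paper, invoking Lemma~\ref{lem_part_nil}(i) and Lemma~\ref{lem_gen} for (i)~$\Rightarrow$~(ii) and unwinding the structure of monogenic semigroups for the rest. The only cosmetic difference is that in (ii)~$\Rightarrow$~(iii) you work with $u^{c+1}=u^{c+2}$ via the class~$c$, whereas the paper uses $u^n=u^{n+1}$ directly; both arrive at period~$1$ by the same mechanism.
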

\begin{proof}
We may assume $n\geq 2$ as the statement is obvious for $n=1$.

(i) $\Rightarrow$ (ii):
According to Lemma~\ref{lem_part_nil}(i) the sets $S^k
\setminus S^{k+1}$ with $1\leq k \leq n-1$ are non-empty. Hence each set
contains exactly one element. The set $S\setminus S^2$ is a generating
set by Lemma~\ref{lem_gen} and thus $S$ is monogenic.

(ii) $\Rightarrow$ (iii):
If $u$ denotes the generator of $S$, then $u^n$ and $u^{n+1}$
both equal the zero element. Therefore the period of $S$ is
$1$.

(iii) $\Rightarrow$ (iv):
If $u$ denotes the generator of $S$ all of $u,u^2,\dots, u^n$ have 
to be pairwise different. And that the period of $S$ is $1$ implies
that equality $u^n=u^{n+1}$ holds.

(iv) $\Rightarrow$ (i):
Clearly $u^n$ is a zero and every product
of $n$ elements equals $u^n$, making $S$ nilpotent. Moreover,
$S^{k}\setminus S^{k+1} = \{u^{k}\}$ for $1\leq k \leq n-1$, showing
that $S$ has nilpotency coclass $0$.
\end{proof}

In a nilpotent semigroup $S$ every element generates a nilpotent
monogenic subsemigroup of coclass $0$. The class of
each such subsemigroup is at most the class of $S$. Of special
interest is the case when equality of the classes holds for some
elements in $S$.
\begin{lemma}
\label{lem_struc_nil}
Let $S$ be a nilpotent semigroup of class $c$. If $|S^{c-1}\setminus
S^{c}|=1$ then there exists an element in $S$ that generates a
nilpotent subsemigroup of class $c$.
\end{lemma}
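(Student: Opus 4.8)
The plan is to reduce the statement to producing a single element $u \in S$ with $u^c \neq 0$. Indeed, since $S$ has class $c$ every product of $c+1$ elements equals the zero, so $u^{c+1}=0$ for every $u$; hence any $u$ with $u^c \neq 0$ generates a monogenic subsemigroup $\langle u\rangle$ with $u^c \neq 0 = u^{c+1}$, which by Lemma~\ref{lem_nil_mono} has class exactly $c$. Thus the whole task becomes: exhibit an element whose $c$-th power is nonzero.

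To set up, I would note that $S^c\setminus S^{c+1}$ is non-empty by Lemma~\ref{lem_part_nil}(i), so fix a nonzero $z\in S^c\setminus S^{c+1}$ together with a factorization $z=s_1s_2\cdots s_c$ into $c$ elements. The hypothesis $|S^{c-1}\setminus S^c|=1$ combined with Lemma~\ref{lem_part_nil}(iii) (applied with $l=c-1$) shows that $S^{c-1}\setminus S^c=\{w\}$ is a singleton. The engine of the argument is the following consequence of Lemma~\ref{lem_part_nil}(ii): in \emph{any} expression of $z$ as a product of $c$ elements, the prefix of length $c-1$ lies in $S^{c-1}\setminus S^c$ and therefore equals $w$; likewise the suffix of length $c-1$ equals $w$. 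In particular $s_2\cdots s_c=w$, which yields the fixed relation $z=s_1w$.

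With this in hand I would iterate a prefix-replacement, always keeping the first factor $s_1$. Starting from the factorization $w=s_1s_2\cdots s_{c-1}$, substituting into $z=s_1w$ gives the length-$c$ product $z=s_1s_1s_2\cdots s_{c-1}$; reading off its length-$(c-1)$ prefix and invoking the engine produces a new factorization $w=s_1^2s_2\cdots s_{c-2}$, with one more $s_1$ and one fewer tail factor than before. Performing this step $c-2$ times drives the tail empty and leaves $w=s_1^{c-1}$, whence $z=s_1w=s_1^c$. Thus $s_1^c=z\neq 0$, and $u=s_1$ is the required element.

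The step I expect to demand the most care is the bookkeeping of this iteration: at each stage one must verify that the current expression really is a product of $c$ elements of $S$, so that Lemma~\ref{lem_part_nil}(ii) legitimately identifies its length-$(c-1)$ prefix with $w$, and one must check that a tail factor remains available to be dropped until the exponent of $s_1$ reaches $c-1$. Finally, the degenerate case $c=1$ should be handled separately, where $S\setminus S^2$ contains a non-zero element that already generates a subsemigroup of class $1$.
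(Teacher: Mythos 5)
Your proposal is correct and follows essentially the same route as the paper's proof: both use Lemma~\ref{lem_part_nil}(ii) together with the hypothesis that $S^{c-1}\setminus S^{c}$ is a singleton to iteratively replace the length-$(c-1)$ prefix/suffix in a factorization of a nonzero element $z\in S^{c}\setminus S^{c+1}$, arriving at $z=s_1^{c}$. The only cosmetic differences are your bookkeeping (iterating on factorizations of $w$ with $z=s_1w$ fixed, rather than directly on factorizations of $z$) and the final step, where you deduce class $c$ from $s_1^{c}\neq 0=s_1^{c+1}$ instead of noting that the powers $s_1,\dots,s_1^{c}$ are pairwise distinct.
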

\begin{proof}
For $s \in S^{c}\setminus S^{c+1}$ take a product $s_1s_2\cdots s_{c}$
equal to $s$. Then both $s_1\cdots s_{c-1}$ and $s_2\cdots s_{c}$
are in $S^{c-1}\setminus S^{c}$ due to Lemma~\ref{lem_part_nil}(ii) and
hence $s_1\cdots s_{c-1}$ equals $s_2\cdots s_{c}$. This yields
$s = s_1s_2s_3\cdots s_{c} = s_1s_1s_2\cdots s_{c-1}$.
Repeating the process starting with $s=s_1s_1s_2\cdots s_{c-1}$ gives
$s=s_1s_1s_1s_2\cdots s_{c-2}$ and leads after $c-1$ iterations to
$s=s_1^{c}$. Conclude by Lemma~\ref{lem_part_nil}(ii) that $s_i^k$
for $1\leq k\leq c$ are pairwise different and are also different from
the zero $s_1^{c+1}$. Hence the semigroup generated by $s_1$ has size
$c+1$ and class $c$.
\end{proof}

The condition in the previous lemma is rather technical. For a fixed
coclass we can turn it into a restriction on the size of the
semigroup. The restriction can be strengthened if also the size of the
generating set is incorporated.

\begin{corollary}
\label{coro_bound}
Let $S$ be a finite, nilpotent semigroup of class $c$ and coclass
$r$. If either $c \geq r+2$ or $c\geq r+4-|S\setminus S^2|$ then there
exists an element in $S$ that generates a nilpotent subsemigroup of
class $c$.
\end{corollary}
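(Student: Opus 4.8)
The plan is to reduce the statement to Lemma~\ref{lem_struc_nil}, whose hypothesis is $|S^{c-1}\setminus S^c|=1$; it then suffices to prove that under either numerical condition one has $|S^{c-1}\setminus S^c|=1$. Writing $a_k=|S^k\setminus S^{k+1}|$ for $1\le k\le c$, Lemma~\ref{lem_part_nil}(i) gives $a_k\ge 1$ for each $k$ and exhibits $S\setminus S^{c+1}$ as the disjoint union of these $c$ blocks; together with $|S^{c+1}|=1$ this yields $|S|=1+\sum_{k=1}^c a_k$. Substituting into $r=|S|-1-c$ produces the identity $r=\sum_{k=1}^c (a_k-1)$, which expresses the coclass as the total \emph{excess} of the blocks over size $1$. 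This identity is the engine of the whole argument.

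Next I would observe that both hypotheses force $c\ge 2$ (the second in fact forces $c\ge 3$, since $r\ge a_1-1=|S\setminus S^2|-1$), so $a_{c-1}$ is defined; as $a_{c-1}\ge 1$ always holds, the claim $a_{c-1}=1$ is what must fail in a proof by contradiction. So suppose $a_{c-1}\ge 2$. Applying the contrapositive of Lemma~\ref{lem_part_nil}(iii) with the index $c-1$: for every $l\le c-1$ there is an index $k=c-1$ with $l\le k\le c$ and $a_k>1$, hence $a_l>1$. Thus $a_l\ge 2$ for all $1\le l\le c-1$. This is the decisive structural point: the excesses cannot be isolated near the top, they must fill in everything below.

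For the first condition the identity then gives $r=\sum_{k=1}^c(a_k-1)\ge\sum_{k=1}^{c-1}(a_k-1)\ge c-1$, so $c\le r+1$, contradicting $c\ge r+2$. For the second condition I would peel off the first block, whose excess is $a_1-1=|S\setminus S^2|-1$: from $a_1,\dots,a_{c-1}\ge 2$ the identity gives $r\ge (a_1-1)+(c-2)$, hence $c\le r+3-|S\setminus S^2|$, contradicting $c\ge r+4-|S\setminus S^2|$. In either case $a_{c-1}=1$, and Lemma~\ref{lem_struc_nil} completes the proof.

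The main difficulty is not computational but conceptual: it lies in recognising the coclass identity $r=\sum_{k=1}^c(a_k-1)$ and in seeing that Lemma~\ref{lem_part_nil}(iii) is precisely the tool forbidding the excesses from concentrating at large indices. The refined second bound is then just the observation that the generator block $S\setminus S^2$ contributes its full excess to $r$, so a large minimal generating set consumes the available coclass and again pins the top block to size one. Getting the constants $r+2$ and $r+4-|S\setminus S^2|$ exactly right is the only place demanding care.
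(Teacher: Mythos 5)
Your proof is correct and takes essentially the same route as the paper: both reduce to Lemma~\ref{lem_struc_nil} by using Lemma~\ref{lem_part_nil}(iii) (in contrapositive form, combined with the counting identity $r=\sum_{k=1}^{c}\bigl(|S^k\setminus S^{k+1}|-1\bigr)$) to show that either numerical hypothesis forces $|S^{c-1}\setminus S^{c}|=1$. The paper compresses this entire computation into the word ``immediate''; you have simply supplied, accurately, the details it omits.
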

\begin{proof}
By Lemma~\ref{lem_part_nil}(iii) it
is immediate that either of the two conditions in the statement
implies that $S$ satisfies the prerequisite in
Lemma~\ref{lem_struc_nil}.
\end{proof}

Note that for a nilpotent semigroup $S$ the only case in which $|S\setminus S^2|=1$
is, according to Lemma~\ref{lem_nil_mono}, if $S$ has coclass $0$. It
follows that the generating set for a semigroup of positive coclass
contains at least two elements and that the second bound in the
previous corollary is at least as good as the first bound, usually better.

\section{Classification for Coclass 1}
\label{sec_cc1}
\noindent
Some general remarks on the classification of semigroups are in place
before stating the results of this section. Two semigroups are
\emph{anti-isomorphic} if one is isomorphic to the dual of the other,
and a semigroup is \emph{self-dual} if it is isomorphic to its
dual. 
Semigroups are usually classified up to isomorphism or
anti-isomorphism, and the results here are stated accordingly. For the
sake of brevity we denote `isomorphism or anti-isomorphism' by
\emph{(anti-)isomorphism} and correspondingly use
\emph{(anti-)isomorphic} to mean `isomorphic or anti-isomorphic'. 
For computational purposes it is often useful to
work with a classification just up to isomorphism. In all results the
self-dual semigroups are indicated thus allowing the reader to extract
each classification up to isomorphism.
  
We now use the structural information provided in the previous
section to classify nilpotent semigroups of coclass 1. Though instead of 
doing so directly we prove a generalisation to arbitrary coclass 
which will also be useful for the classification of semigroups of
coclass 2 in the forthcoming section.

\begin{lemma}
\label{lem_coclass_d}
For $c,r\in\N$ with $c\geq 3$ the following is a complete list up to
(anti-)isomorphism of representatives of nilpotent semigroups of class
$c$ and coclass $r$, in which the unique minimal generating set is of
size $r+1$ and that contain at least $r$ copies of the monogenic,
nilpotent semigroup of class $c$:
\begin{eqnarray*}
\mathcal{H}_k & \!\!\!\! = \langle u_1,u_2,\dots,u_r,v \mid &\!\!\!\! u_1^{c+1}=u_1^{c+2};
 u_1^2=u_i^2=u_iu_j=u_ju_i, 1 \leq j < i \leq r;\\
 &&\!\!\!\! u_iv=vu_i=u_1^k, 1 \leq i \leq r; v^2=u_1^{2k-2}\rangle,
2\leq k\leq c-1;\\
\mathcal{J}_k &\!\!\!\! = \langle u_1,u_2,\dots,u_r,v \mid &\!\!\!\! u_1^{c+1}=u_1^{c+2};
  u_1^2=u_i^2=u_iu_j=u_ju_i, 1 \leq j < i \leq r;\\
 &&\!\!\!\! u_iv=vu_i=u_1^k, 1 \leq i \leq r; v^2=u_1^{c}\rangle,
\lfloor c/2\rfloor+2 \leq k\leq c-1;\\
\mathcal{X} &\!\!\!\! = \langle u_1,u_2,\dots,u_r,v \mid &\!\!\!\! u_1^{c+1}=u_1^{c+2};
  u_1^2=u_i^2=u_iu_j=u_ju_i, 1 \leq j < i \leq r;\\
 &&\!\!\!\! u_iv=vu_i=u_1^{(c+2)/2}, 1 \leq i \leq r; v^2=u_1^{c+1}\rangle, 
\mbox{ if } c\equiv 0\mod 2;\\
\mathcal{N}_{k,l,m}^e &\!\!\!\! =\langle u_1,u_2,\dots,u_r,v \mid & \!\!\!\! 
u_1^{c+1}=u_1^{c+2}; u_1^2=u_i^2=u_iu_j=u_ju_i, 1 \leq j < i \leq r;\\
  &&\!\!\!\! vu_i=u_iv=u_1^{c+1}, 1\leq i \leq k;\\
  &&\!\!\!\! vu_i=u_1^{c+1}, u_iv=u_1^{c}, k+1\leq i\leq l;\\
  &&\!\!\!\! vu_i=u_1^{c}, u_iv=u_1^{c+1}, l+1\leq i\leq m;\\
  &&\!\!\!\! vu_i=u_iv=u_1^{c}, m+1\leq i \leq r; v^2=u_1^{c+e}\rangle,\\
  &&\!\!\!\! 0\leq k \leq m\leq r, k\leq l \leq \lfloor(k+m)/2\rfloor, e\in \{0,1\}.
\end{eqnarray*}
All semigroups in the list, except $\mathcal{N}^e_{k,l,m}$ with 
$l\neq (k+m)/2$, are self-dual.
\end{lemma}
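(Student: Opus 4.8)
\emph{Structure.} The plan is to first determine the structure forced on any such $S$, then enumerate the possible multiplications, and finally establish irredundancy and single out the self-dual members. Put $n=|S|=c+r+1$. By Lemma~\ref{lem_gen} the unique minimal generating set is $S\setminus S^2$, of size $r+1$ by hypothesis. Since Lemma~\ref{lem_part_nil}(i) gives $\sum_{k=1}^{c}(|S^k\setminus S^{k+1}|-1)=r$ and the layer $k=1$ already contributes $r$, every higher layer is a singleton: $|S^k\setminus S^{k+1}|=1$ for $2\le k\le c$. The hypothesis that $r$ generators span class-$c$ monogenic subsemigroups provides (cf.\ Lemma~\ref{lem_struc_nil}) a generator $u_1$ with $u_1^c\neq0$; by Lemma~\ref{lem_part_nil}(ii) the unique element of $S^k\setminus S^{k+1}$ is then $u_1^k$, so $S^2=\{u_1^2,\dots,u_1^{c+1}\}$ with $u_1^{c+1}=0$. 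Label the generators so that $u_1,\dots,u_r$ each generate a class-$c$ subsemigroup (hence $u_i^k=u_1^k$ for $k\ge2$) and $v$ is the remaining one. As every product of two generators lies in $S^2$, the multiplication is determined by the powers of $u_1$ together with the values of $u_iu_j$, $u_iv$, $vu_i$ and $v^2$.

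\emph{Completeness.} From $u_iu_1^{c-1}=u_i^c=u_1^c\neq0$ and Lemma~\ref{lem_part_nil}(ii) one gets $u_iu_1=u_1u_i=u_1^2$, and chaining this yields $u_iu_j=u_1^2$ for all $i,j\le r$, giving the relations $u_1^2=u_i^2=u_iu_j=u_ju_i$. Write $u_iv=u_1^{p_i}$, $vu_i=u_1^{q_i}$, $v^2=u_1^s$, with exponent $c+1$ meaning $0$. Associativity of $u_i(vu_j)=(u_iv)u_j$ reduces to $u_1^{q_j+1}=u_1^{p_i+1}$, i.e.\ $p_i=q_j$ or $p_i,q_j\ge c$. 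If some exponent is $<c$ this forces all $p_i=q_i$ to equal one value $k$ with $2\le k\le c-1$; then $v^2u_i=v(vu_i)$ reduces to $u_1^{s+1}=u_1^{2k-1}$, so $s=2k-2$ when $2k-1\le c$, while for $2k-1\ge c+1$ the relation only demands $s\in\{c,c+1\}$. Matching $(k,s)$ gives exactly $\mathcal{H}_k$ (with $s=2k-2$, all $2\le k\le c-1$), the new solution $s=c$ arising precisely when $2k-2>c$, i.e.\ $k\ge\lfloor c/2\rfloor+2$ ($\mathcal{J}_k$), and the new solution $s=c+1$ precisely when $2k-2=c$, i.e.\ $c$ even and $k=(c+2)/2$ ($\mathcal{X}$). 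If instead all $p_i,q_i\ge c$, the same relation now gives $u_1^{s+1}=0$, forcing $s\in\{c,c+1\}$, and no relation constrains the pairs $(vu_i,u_iv)\in\{u_1^c,u_1^{c+1}\}^2$; recording the four possible pair-types by the partition $(k,l-k,m-l,r-m)$ of the $u_i$ and setting $v^2=u_1^{c+e}$ yields $\mathcal{N}^e_{k,l,m}$.

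\emph{Existence and irredundancy.} For each presentation I would exhibit the explicit model on $\{u_1,\dots,u_1^{c+1}\}\cup\{u_2,\dots,u_r\}\cup\{v\}$ with the above products and check associativity by cases on the number of factors equal to $v$; this model has $c+r+1$ elements, class $c$ and coclass $r$, a minimal generating set of size $r+1$ and $r$ embedded class-$c$ subsemigroups, and realises the relations. For irredundancy, $S\setminus S^2$ and the powers $u_1^k$ are canonical, so $v^2$ and the multiset $\{(vu_i,u_iv)\}_i$ are isomorphism invariants (swapped entrywise by anti-isomorphism); these separate the four families and, inside each, the parameters $k$, $s$, and the class-sizes $k,l-k,m-l,r-m$. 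The dual interchanges the two mixed classes of $\mathcal{N}^e_{k,l,m}$ (sending $l$ to $k+m-l$) while fixing everything else, so the constraint $l\le\lfloor(k+m)/2\rfloor$ selects one representative per (anti-)isomorphism class.

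\emph{Self-duality.} Each of $\mathcal{H}_k$, $\mathcal{J}_k$, $\mathcal{X}$ is commutative, hence self-dual. For $\mathcal{N}^e_{k,l,m}$ the dual has its two mixed classes interchanged and the same $v^2$; thus the map fixing the $u_1$-powers and $v$ and bijecting the second mixed class onto the third is an isomorphism onto the dual exactly when $l-k=m-l$, that is $l=(k+m)/2$, whereas unequal class-sizes obstruct any such isomorphism when $l\neq(k+m)/2$. This yields the asserted list of self-dual semigroups. The main obstacle is the exponent bookkeeping in the completeness step, especially pinning down the $\mathcal{H}_k/\mathcal{J}_k/\mathcal{X}$ ranges at the boundary $2k-1=c+1$.
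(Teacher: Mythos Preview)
Your proposal is correct and follows essentially the same strategy as the paper: reduce to the singleton layers $S^k\setminus S^{k+1}$ for $k\ge2$, force $u_iu_j=u_1^2$, and then determine the remaining products $u_iv,vu_i,v^2$ by associativity, splitting into the ``small exponent'' case (yielding $\mathcal{H}_k,\mathcal{J}_k,\mathcal{X}$) and the ``large exponent'' case (yielding $\mathcal{N}^e_{k,l,m}$). The only organisational difference is that you use the single relation $u_i(vu_j)=(u_iv)u_j$ to pin down all $p_i,q_j$ simultaneously, whereas the paper first fixes $u_1v$ and then propagates to the other generators; the boundary analysis for $\mathcal{J}_k$ versus $\mathcal{X}$ and the self-duality count via $l\mapsto k+m-l$ match the paper's exactly.
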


\begin{proof}
Let $S=\langle u_1,u_2,\dots,u_r,v \rangle$ be a nilpotent semigroup
fulfilling the conditions described in the statement of the lemma where
$u_1,u_2,\dots, u_r$ denote $r$ elements of $S$ that each generate a
subsemigroup of class $c$. According to Lemma~\ref{lem_part_nil}(i) the
sets $S^k\setminus S^{k+1}$ for $2\leq k\leq c$ each contain exactly one
element which consequently equals $u_i^k$ for all $1\leq i \leq r$. As
the index $i$ does not influence the value of $u_i^k$ for $k\geq 2$ we
simplify notation and write just $u^k$.

For given $1\leq i,j \leq r$ the equality $u_iu_j=u^k$ holds for some
$2\leq k \leq c+1$. It follows $u_i(u_iu_j) = u_iu^k= u^{k+1}$
and also $(u_iu_i)u_j = u^2u_j= u^3$. This proves $k=2$ using
that $u^3$ is not the zero because $3 < c+1$ by assumption. As $i,j$
were arbitrary $u_iu_j$ equals $u^2$ for all $1\leq i,j\leq r$.

We shall now conduct a case distinction depending on the value of
$u_1v$. Note that the whole of $S$ is determined if we know the value
of $v^2$ and the values of $u_jv$ and $vu_j$ for all $1\leq j\leq r$ because
$u^kv$ and $vu^k$ can then be deduced for all $2\leq k \leq c+1$.
The choices not contradicting associativity are determined below.
Choices leading to (anti-)isomorphic semigroups and are identified; 
no two semigroups from
different cases are (anti-)isomorphic. Note that any (anti-)isomorphism 
sends generators to generators and is as such induced by a permutation
of $u_1,u_2,\dots,u_r,v$.

{\bf Case 1:} $u_1v = u^k$ with $2\leq k \leq \lceil c/2\rceil$. 
Let $l \in \{2,3,\dots,c+1\}$ such that $vu_1=u^l$. From $u^{k+1}= u_1vu_1
= u^{l+1}$ it follows that $k=l$ as $u^{k+1}$ is not the zero. Using the same
type of argument considering $u_jvu_1 = u_ju^k=u^{k+1}$ it follows $u_jv=u^k$
and similarly $vu_j=u^k$ for all $2\leq j\leq r$. Also $v^2=u^m$ implies
$u^{m+1}=v^2u=vu^k=u^{2k-1}$, and hence $m=2k-2$ as $u^{2k-1}$ is not
the zero. 
It follows that the value of any proper product of generators is
determined by its length and by how many times $v$ appears (A
product of length $i+j$ that contains $j$ times $v$ equals
$u^{i+j(k-1)}$.). This guarantees that the multiplication does not
contradict associative. Hence $S$ is a homomorphic image of $\mathcal{H}_k$.
Apart from the generators each presentation contains at most the
elements $u^k$ for $2\leq k \leq c+1$ and has therefore size $c+2$ and
class $c$. For all values of $k$ the semigroup is commutative and no
two are (anti-)isomorphic since $v^2$ is different for different $k$. (Note
that if $k=2$, than $\langle v\rangle$ is also of class $c$ and any
permutation of the generators induces an automorphism of $S$.)

{\bf Case 2:} $u_1v = u^k$ with $\lceil c/2\rceil < k \leq
c-1$. As in the previous case $vu_j=u_jv=u^k$ for all $1\leq j\leq
r$. Now $vvu=u^{2k-1}$ equals the zero $u^{c+1}$. This leaves the two
choices $u^{c}$ and $u^{c+1}$ for $v^2$. Again, the value of a
product of generators only depends on its length and the number of
times $v$ appears, making the multiplication associative. Similar to
Case 1 it follows that $S$ is isomorphic to one of the presentations 
$\mathcal{H}_k$ or $\mathcal{J}_k$ respectively $\mathcal{X}$ if $k=(c+2)/2$ 
depending on the value of $v^2$. All these presentations define pairwise
not (anti-)isomorphic, commutative semigroups.

{\bf Case 3:} $u_1v \in \{u^{c},u^{c+1}\}$.
Let $l \in \{2,3,\dots,c+1\}$ such that $vu_1=u^l$. From
$u_1(vu_1)=u^{l+1}$ and the fact that $(u_1v)u_1$ equals the zero, $u^{c+1}$,
it follows that $l\in \{c,c+1\}$. Considering $u_jvu_1$ and
$u_1vu_j$ one then shows with the same type of argument that $u_jv,
vu_j \in \{u^{c},u^{c+1}\}$ for all $2\leq j\leq r$. In a similar
way $v^2=u^m$ leads to $u^{m+1}=v(vu)=vu^l=u^{2l-1}=u^{c+1}$, and hence $m
\in\{c,c+1\}$. Every choice gives an associative multiplication as
all products of three elements involving $v$ equal $u^{c+1}$.
Some multiplications lead to (anti-)isomorphic semigroups because every
permutation of $u_1,u_2,\dots,u_r$ yields an isomorphism and an
anti-isomorphism. To find representatives we partition the
set $\{u_j\mid 1\leq j \leq r\}$ into four parts depending on $u_jv$
and $vu_j$ equalling $u^c$ or $u^{c+1}$. Two semigroups from this case are
isomorphic if and only if they have the same number of generators of
each of the four types and $v^2$ takes the same value; and they are
anti-isomorphic if by interchanging the two types with $vu_j\neq u_jv$ in
one of the semigroups we obtain two isomorphic semigroups. Hence a semigroup
from this case is self-dual if there is the same number of generators
of the two types with $vu_j\neq u_jv$. Presentations for every possible choice
for the numbers of types and the value of $v^2$ up to (anti-)isomorphism are
then given by $\mathcal{N}_{k,l,m}^e$ as in the statement of the lemma.
\end{proof}

For nilpotent semigroups of class $2$ the step in the proof of
Lemma~\ref{lem_coclass_d} that restricts the possible results for
products of two generators does not work. Indeed, for semigroups of
class $2$ every combination can occur, a fact that is used
in~\cite{DM12} to count such semigroups of any order.

\begin{theorem}
\label{thm_cc1}
For $n\in\N$ with $n\geq 5$ the following is a complete list up to
(anti-)isomorphism of representatives of nilpotent semigroups of order
$n$ and coclass $1$:
\begin{eqnarray*}
&H_k &\!\!\!\!= \langle u,v \mid u^{n-1}=u^n, uv=u^k, vu=u^k,v^2=u^{2k-2}
\rangle, 2\leq k\leq n-1;\\
&J_k &\!\!\!\!= \langle u,v \mid u^{n-1}=u^n, uv=u^k, vu=u^k,v^2=u^{n-2}
\rangle, n/2< k\leq n-1;\\
&X &\!\!\!\!= \langle u,v \mid u^{n-1}=u^n, uv=u^{n/2},
vu=u^{n/2},v^2=u^{n-1}\rangle, \mbox{ if } n\equiv 0 \mod 2;\\
&N_1 &\!\!\!\!= \langle u,v \mid u^{n-1}=u^n, uv=u^{n-1},
vu=u^{n-2},v^2=u^{n-2} \rangle;\\
&N_2 &\!\!\!\!= \langle u,v \mid u^{n-1}=u^n, uv=u^{n-1},
vu=u^{n-2},v^2=u^{n-1} \rangle.
\end{eqnarray*}
All semigroups in the list, except $N_1$ and $N_2$, are self-dual.
\end{theorem}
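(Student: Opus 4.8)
The plan is to obtain Theorem \ref{thm_cc1} as the special case $r=1$ of Lemma \ref{lem_coclass_d}, so the first task is to check that every nilpotent semigroup $S$ of order $n\geq 5$ and coclass $1$ satisfies that lemma's hypotheses. Such an $S$ has class $c=n-1-1=n-2$, so $c\geq 3$ precisely because $n\geq 5$. Writing the coclass as $\sum_{k=1}^{c}\bigl(|S^k\setminus S^{k+1}|-1\bigr)=1$ forces exactly one layer to contain two elements and all others to contain one; since the minimal generating set $S\setminus S^2$ (Lemma \ref{lem_gen}) has at least two elements in positive coclass, that layer must be $S\setminus S^2$, whence $|S\setminus S^2|=2=r+1$ with $r=1$. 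Finally, Corollary \ref{coro_bound} applies because $c=n-2\geq 3=r+2$, so some generator generates a subsemigroup of class $c$; thus $S$ contains at least $r=1$ copy of the monogenic nilpotent semigroup of class $c$, and all hypotheses of Lemma \ref{lem_coclass_d} hold. It therefore suffices to specialise the four families of that lemma to $r=1$ and substitute $c=n-2$.

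Setting $u=u_1$ and dropping the now-vacuous relations among $u_2,\dots,u_r$, the presentations $\mathcal{H}_k$, $\mathcal{J}_k$ and $\mathcal{X}$ become respectively $H_k$, $J_k$ and $X$ verbatim. What must be verified is that the lemma's ranges translate correctly: $2\leq k\leq c-1$ becomes $2\leq k\leq n-3$, and $\lfloor c/2\rfloor+2\leq k\leq c-1$ becomes the lower endpoint of $n/2<k$ (checking both parities of $n$ shows $\lfloor (n-2)/2\rfloor+2$ is exactly the least integer exceeding $n/2$). Here one uses that $u^{2k-2}$ in $H_k$ collapses to the zero $u^{n-1}$ precisely when $2k-2\geq n-1$, and that for even $n$ the value $k=n/2$ is assigned to $H_{n/2}$ and $X$ while $J_k$ deliberately starts at $k>n/2$ to avoid duplicating $H_{n/2}$.

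The one genuinely fiddly step is $\mathcal{N}^e_{k,l,m}$ with $r=1$. The constraints $0\leq k\leq m\leq 1$ and $k\leq l\leq\lfloor(k+m)/2\rfloor$ admit only the triples $(0,0,0)$, $(0,0,1)$, $(1,1,1)$, each with $e\in\{0,1\}$, giving six semigroups. Computing $uv$, $vu$, $v^2$ in each, the four commutative ones arise from $(0,0,0)$ and $(1,1,1)$ (where $uv=vu$): these are $J_{n-2},H_{n-2}$ and $J_{n-1},H_{n-1}$, which are exactly the members absorbed by extending the $H_k$ and $J_k$ ranges up to $k=n-1$. The remaining triple $(0,0,1)$, where $uv=u^{n-1}\neq u^{n-2}=vu$, yields $N_1$ when $e=0$ and $N_2$ when $e=1$. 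I expect the main obstacle to lie in this bookkeeping: one must confirm that the extended ranges cover each of $J_{n-2},H_{n-2},J_{n-1},H_{n-1}$ once and only once, with nothing lost or duplicated at the boundary values $k=n-2,n-1$, so that the relabelling of the lemma's irredundant list remains a bijection.

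Self-duality then transfers directly from Lemma \ref{lem_coclass_d}. The families $H_k$, $J_k$ and $X$ are commutative and hence self-dual, and among the $\mathcal{N}^e_{k,l,m}$ representatives the lemma declares self-dual exactly those with $l=(k+m)/2$; this holds for $(0,0,0)$ and $(1,1,1)$ (consistent with their being the commutative $J$ and $H$ members) but fails for $(0,0,1)$. Consequently only $N_1$ and $N_2$ are non-self-dual, as claimed.
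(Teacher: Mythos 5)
Your proposal is correct and follows essentially the same route as the paper: reduce to Lemma~\ref{lem_coclass_d} with $r=1$ via Lemmas~\ref{lem_part_nil} and~\ref{lem_gen} and Corollary~\ref{coro_bound}, then translate the presentations with $c=n-2$. Your detailed bookkeeping of the $\mathcal{N}^e_{k,l,m}$ cases matches the paper's parenthetical remark identifying $H_{n-2}, H_{n-1}, J_{n-2}, J_{n-1}$ with $\mathcal{N}_{0,0,0}^1, \mathcal{N}_{1,1,1}^1, \mathcal{N}_{0,0,0}^0, \mathcal{N}_{1,1,1}^0$, and your self-duality transfer is exactly the paper's.
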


\begin{proof}
From Lemmas~\ref{lem_part_nil} and~\ref{lem_gen} it follows that every
semigroup of coclass $1$ is generated by $2$ elements. Together with
Corollary~\ref{coro_bound} this implies that Lemma~\ref{lem_coclass_d}
covers all semigroups of coclass $1$ and order at least $5$. The presentations
in the statement are obtained by choosing $r=1$ in Lemma~\ref{lem_coclass_d}.
(There is a slight discrepancy in nomenclature as the four semigroups
$H_{n-2}, H_{n-1}, J_{n-2},$ and $J_{n-1}$ correspond to $\mathcal{N}_{0,0,0}^1,
\mathcal{N}_{1,1,1}^1,\mathcal{N}_{0,0,0}^0,$ and $\mathcal{N}_{1,1,1}^0$.)
\end{proof}

Note that there are no semigroups of coclass $1$ and order $1$ or $2$,
and there is only the zero semigroup of order $3$. For $n=4$ the
presentations given in Theorem~\ref{thm_cc1} yield semigroups of order
$4$ and coclass $1$, but $N_1$ becomes self-dual and the following two
presentations of self-dual semigroups are missing from a complete list:
\[
\langle u,v \mid u^2=u^3, u^2=v^2, uv=vu, u^2=v^3 \rangle \mbox{
  \ and \ }\langle u,v \mid u^2=u^3, u^2=v^2, u^2=uv, u^2=v^3 \rangle.
\]

\section{Classification for Coclass 2}
\label{sec_cc2}
\noindent
For some general remarks on the classification see the beginning of
Section~\ref{sec_cc1}. The natural next step is to consider nilpotent
semigroups of coclass~$2$. Due to Lemmas~\ref{lem_part_nil}(i)
and~\ref{lem_nil_mono} the minimal generating sets of such semigroups
contain either $2$ or $3$ elements. These two possibilities shall be
treated separately. 

\begin{theorem}
\label{thm_cc2_d2}
For $n\in\N$ with $n\geq 7$ the following is a complete list up to 
(anti-)isomorphism of representatives of nilpotent semigroups of order
$n$ and coclass $2$ whose minimal generating set has size $2$:
\begin{align*}
&T_{1,i} =\langle u,v \mid u^{n-2}=u^{n-1}, uv = vu,
v^2 = uv, v^3 = u^{n-i}\rangle, i\in\{2,3\};\\
&T_{2,k} =\langle u,v \mid u^{n-2}=u^{n-1}, uv = vu, v^2 =
u^{2k-4}, u^2v = u^k \rangle, 3 \leq k < n/2;\\
&T_{2,i,k} =\langle u,v \mid  u^{n-2}=u^{n-1}, uv = vu, v^2 =
u^{n-i}, u^2v = u^k \rangle,   n/2 \leq k \leq n-2,i \in\{2,3,4\};\\
&T_{3} =\langle u,v \mid  u^{n-2}=u^{n-1}, v^2 = uv, vu = u^2,
uv^2 = u^3 \rangle;\\
&T_{3,i} =\langle u,v \mid  u^{n-2}=u^{n-1}, v^2 = uv, vu =
u^{n-i}, uv^2 = u^{n-2} \rangle,i \in\{2,3\};\\
&T_{4,k} =\langle u,v \mid  u^{n-2}=u^{n-1}, uv = vu, uv = u^k, v^3 = u^{3k-3}
\rangle, 2 \leq k < n/3;\\
&T_{4,i,k} =\langle u,v \mid  u^{n-2}=u^{n-1}, uv = vu, uv = u^k, v^3 =
u^{n-i}\rangle, n/3 \leq k \leq n-4, i\in\{2,3\};\\
&T_{4,i,j,k} =\langle u,v \mid  u^{n-2}=u^{n-1}, uv = u^{n-i}, vu =
u^{n-j}, v^3 = u^{n-k}\rangle, i,j,k\in\{2,3\};\\
&T_{5,k} =\langle u,v \mid  u^{n-2}=u^{n-1}, uv = u^k, v^2 = u^{2k-2}, vu^2 = u^{k+1}
\rangle, 2 \leq k < n/2;\\
&T_{5,i,k} =\langle u,v \mid  u^{n-2}=u^{n-1}, uv = u^k, v^2 = u^{n-i}, vu^2 = u^{k+1}
\rangle, n/2 \leq k < n-5, i\in\{2,3\};\\
&T_{5,i,j,k} =\langle u,v \mid  u^{n-2}=u^{n-1}, uv = u^{n-i}, v^2 =
u^{n-j}, vu^2 = u^{n-k} \rangle, i\in\{2,3,4\}, j,k\in\{2,3\}.
\end{align*}
A semigroup from the list is self-dual if it is commutative or is $T_{5,2}$.
\end{theorem}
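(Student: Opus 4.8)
The plan is to classify all nilpotent semigroups $S$ of order $n$ and coclass $2$ whose minimal generating set $S\setminus S^2$ has exactly two elements. By Lemma~\ref{lem_gen} and Lemma~\ref{lem_part_nil}(i), a semigroup of coclass $2$ with two generators has class $c=n-2$, so the partition from Lemma~\ref{lem_part_nil}(i) has exactly one set $S^k\setminus S^{k+1}$ of size $2$ and all the others of size $1$. Writing the generators as $u,v$, I would first determine where the size-$2$ level sits. Since $|S\setminus S^2|=2$, the generators are the two elements of $S\setminus S^2$, so $S^2\setminus S^3$ onward are almost all singletons; by Lemma~\ref{lem_part_nil}(iii), once a level becomes a singleton it stays a singleton, so the doubling must occur at the bottom, i.e.\ $S\setminus S^2=\{u,v\}$ has size $2$ and every higher level is a singleton. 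This means $u^k=v^k$ for each $k\ge 2$ (both lie in the singleton $S^k\setminus S^{k+1}$), and I may normalise by choosing $u$ so that $\langle u\rangle$ realises the full class $c$; Corollary~\ref{coro_bound} guarantees such a generator of class $c$ exists once $n\ge 7$.

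The core of the argument is then a finite case analysis on the ``shape'' of $v$, entirely analogous to the proof of Lemma~\ref{lem_coclass_d} but now with the second generator no longer required to itself generate a subsemigroup of class $c$. I would stratify the cases by the level of $v^2$ and of the mixed products $uv$ and $vu$ relative to $u^2$. The key normal-form fact, exactly as in Lemma~\ref{lem_coclass_d}, is that the whole multiplication is determined once we fix the values of $uv$, $vu$, $v^2$ (and, in the cases where these collapse too high, one further ``tie-breaking'' product such as $v^3$, $u^2v$, or $vu^2$). As there, each product of generators equals some power of $u$, and its value is pinned down by length together with the number of occurrences of $v$; associativity then imposes the polynomial-exponent relations (of the form $u^{k+1}=u\cdot u^k$) that cut the admissible choices down to the listed families $T_{\bullet}$. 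The extra tie-breaking generator is precisely what produces the third index in $T_{2,k}$, $T_{4,k}$, $T_{5,k}$ and their truncated variants: when the naive product $v^2$ or $uv$ already lands in the top levels, a single additional associativity constraint distinguishes the remaining possibilities.

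For the self-duality claim I would argue directly: the dual of $S$ interchanges $uv$ with $vu$ while fixing $u^2=v^2$-powers, $v^2$, $v^3$, $u^2v\leftrightarrow vu^2$, etc. Every presentation in the list that is commutative (those imposing $uv=vu$, which is all of the $T_1,T_2,T_4$ (commutative branch), $T_5$ commutative branch) is therefore trivially isomorphic to its dual. The only genuinely noncommutative representatives are those where $uv\neq vu$ (the $T_3$, $T_{4,i,j,k}$, $T_5$ families with $i\neq j$ or asymmetric mixed products); among these, dualising permutes the parameters $(i,j)$ and I would check that the resulting presentation is isomorphic back into the same list only in the single exceptional case $T_{5,2}$, where the symmetry of the exponents makes the dual isomorphic to the original via the map swapping the roles of the two mixed products. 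So the self-dual members are exactly the commutative ones together with $T_{5,2}$.

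The main obstacle I anticipate is bookkeeping rather than conceptual: ensuring the case split is genuinely exhaustive and that the parameter ranges (the inequalities like $3\le k<n/2$, $n/2\le k\le n-2$, the index sets $i\in\{2,3,4\}$) are exactly those for which the associativity relations are consistent and yield distinct, order-$n$ semigroups, with no double-counting across families and no missing boundary case. In particular, the boundary values where a product ``overflows'' into the zero $u^{n-1}$ (forcing the switch from the generic $T_{\ast,k}$ form to the truncated $T_{\ast,i,k}$ form) must be handled carefully, and one must verify by the same isomorphism-sends-generators-to-generators principle used in Lemma~\ref{lem_coclass_d} that the only (anti-)isomorphisms are those induced by permuting $\{u,v\}$, so that distinct parameter tuples give non-(anti-)isomorphic semigroups. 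This verification is the crux that guarantees the list is both complete and irredundant.
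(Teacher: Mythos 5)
Your proposal breaks down at its very first structural step: you compute the class of a $2$-generated coclass-$2$ semigroup of order $n$ as $c=n-2$, but coclass is $|S|-1-c$, so coclass $2$ forces $c=n-3$. With $c=n-3$ the partition of Lemma~\ref{lem_part_nil}(i) distributes the $n-1$ nonzero elements over $n-3$ non-empty levels, so the excess is $2$, not $1$. Since $|S\setminus S^2|=2$ and, by Lemma~\ref{lem_part_nil}(iii), singleton levels persist upward, the level $S^2\setminus S^3$ must also contain exactly two elements (were it a singleton, the total count would be $n-2$, a contradiction), and only the levels from $S^3\setminus S^4$ onward are singletons. The picture you describe --- one doubled level at the bottom, all higher levels singletons, hence $u^k=v^k$ for all $k\geq 2$ --- is precisely the structure of coclass $1$, and it forces $|S|=n-1$; it cannot describe any semigroup in the theorem. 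Indeed most of the listed presentations contradict it: in $T_{2,i,k}$ one has $v^2=u^{n-i}\neq u^2$, and in $T_{1,i}$ one has $v^2=uv$, which is not a power of $u$ at all. (Your inference is also locally flawed: $v^k$ lies in $S^k$ but need not lie in $S^k\setminus S^{k+1}$, so you cannot conclude $v^k=u^k$ from the level being a singleton.)

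As a consequence, your case analysis is missing the object around which the paper's proof is organized: the element $y\in S^2\setminus S^3$ that is neither $v$ nor a power of $u$. Since $y$ lies in $S^2\setminus S^3$, at least one of the products $uv$, $vu$, $v^2$ must equal $y$, and the paper's cases (with their duals) are exactly the possible patterns of which of these three products equal $y$; the parameters in the families $T_{\bullet}$ then record the powers of $u$ taken by the remaining products $vu^2$, $uy$, $vy$, as constrained by associativity. Without $y$ there is no way to account for the $n$-th element of $S$, no principle generating the case split, and no basis for the claimed exhaustiveness. Your normalization $|\langle u\rangle|=n-2$ via Corollary~\ref{coro_bound}, and your observation that only the transposition of $u$ and $v$ can induce (anti-)isomorphisms between the resulting semigroups, are both sound and agree with the paper; but the skeleton they hang on is wrong, so the proposal cannot be completed to a correct argument without first repairing the level structure.
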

\begin{proof}
The proof uses the same methods as those of Lemma~\ref{lem_coclass_d}
and Theorem~\ref{thm_cc2_d3}. Let $S=\langle u,v\rangle$ be a
semigroup of order $n$ and coclass $2$. By Corollary~\ref{coro_bound}
we may assume without loss of
generality that $|\langle u\rangle|=n-2$. Further denote by $y$
the element in $S$, which is neither $v$ nor a power of $u$. As
$y\in S^2\setminus S^3$ at least one of $uv,vu,v^2$ has to equal
$y$. Note that the values of these products with two generators
together with the values of $vu^2,vy,uy$ uniquely define
$S$.\footnote{The remaining products are deduced as follows:
  $vu^k=(vu^2)u^{k-2}$; $u^kv=u^{k-1}(uv)$;
\begin{equation*} yu^k=\begin{cases}
vuu^k=(vu^2)u^{k-1} & \textrm{ if } vu=y\\
v^2u^k=v(vu)u^{k-1}=(vu)u^{l+k+2}=u^{2l+k-2} & \textrm{ if } vu=u^l, v^2=y\\
uvu^k = u(vu)u^{k-1}=u^{l+k}& \textrm{ if } vu=u^l,uv=y;
\end{cases}\end{equation*}
\begin{equation*} yv=\begin{cases}
v^2v=vv^2=vy & \textrm{ if } v^2=y\\
uvv = uu^l =u^{l+1} & \textrm{ if } v^2=u^l, uv=y\\
vuv = vu^k=(vu^2)u^{k-2} & \textrm{ if } uv=u^k, vu=y
\end{cases}\end{equation*}.
}
We shall determine the choices of values not contradicting
associativity in several cases depending on which products equal $y$.
For all resulting multiplications the value of a product with three
elements will only depend on the number of times $v$ and $y$ appear.
Such multiplications are obviously associative.

Each of the presentations from the list in the statement of the
theorem leads to a semigroup with at most $n$ elements. Hence,
whenever the considerations in the following imply that $S$ fulfils
the relations $R$ of one of the presentations then $\langle u,v\mid
R\rangle$ is a presentation for $S$. 

{\bf Case 1:} $y=v^2=uv=vu$. Let $vy=u^k$. Since $u$ and $v$ commute,
this yields $uy=uv^2=vuv=vy=u^k$ and $vu^2=uvu=uy=u^k$. Then 
\[
u^{k+1}=uvy=uyv=u^kv=u^{k-1}uv=u^{k-1}y=u^{k-2}u^k=u^{2k-2},
\]
which gives $k=3$ or $k\geq n-3$, and thus leads to 3
semigroups. For any $k\in\{3,n-3,n-2\}$ the respective
semigroup fulfils the relations of
\[
\langle u,v \mid u^{n-2}=u^{n-1}, uv = vu, v^2 = uv, v^3 = u^k\rangle.
\]

{\bf Case 2:} $y=vu=uv$. Let $v^2=u^l$. It follows
$vy=vvu=u^{l+1}$. Let $uy=u^k$ then $vu^2=uvu=uy=u^k$ and
\begin{equation}
\label{eq_nil_n-2}
u^{l+2}= v^2u^2=vuvu=vuy=vu^k=u^kv=u^{k-2}uy=u^{2k-2}.
\end{equation}

If $2\leq l \leq n-5$ then \eqref{eq_nil_n-2} gives $k=l/2+2$. That $k$
has to be an integer yields $\lceil n/2 \rceil - 3$
semigroups with presentations $T_{2,l}$. 

If $n-4\leq l \leq n-2$ then
$k\geq n/2$ due to \eqref{eq_nil_n-2}. Hence for each of the three
choices for $l$ there are $3(\lfloor n/2 \rfloor -1)$ semigroups with
presentation $T_{2,l,k}$.

{\bf Case 3:} $y=uv=v^2$. Let $vu=u^l$. Then
\[
u^{l+1}= uu^l=uvu=yu=vvu=vu^l=vuu^{l-1}=u^{2l-1}
\]
yields either $l=2$ or $l\geq n-3$. Furthermore $vu^2=u^{l+1}$ and
\[
uy=uv^2=uvv=yv=vvv=vy=vuv=u^lv=u^{l-2}uy,
\]
which shows that all three values for $l$ lead to valid choices for
$vu^2,uy,$ and $vy$ defining an associative multiplication. Hence this
case accounts for 3 semigroups with presentations $T_{3},T_{3,2}$ and $T_{3,3}$.

{\bf Case 3':} $y=vu=v^2$. This case leads to the dual semigroups to
those from Case~3.

{\bf Case 4:} $v^2=y$. Let $vu=u^k$ and $uv=u^l$. Then
$u^{k+1}=uvu=u^{l+1}$ and hence $k=l$ or $k,l \in
\{n-3,n-2\}$. Furthermore $vu^2=vuu=u^{k+1}$ and $uy =
uvv=u^lv=u^{2l-1}$. For the value of $vy$ consider
$uvy=u^lvv=u^{2l-1}v=u^{3l-2}$.

If $2 \leq l < n/3$ then $vy=u^{3l-3}$ and $l < n-3$, which
leads to $\lceil n/3\rceil -2$ semigroups with presentations $T_{4,l}$. 

If $n/3 \leq l \leq n-4$ then $vy \in
\{u^{n-3},u^{n-2}\}$, leading in this case to $2(n-4-\lceil n/3\rceil +1)$
semigroups with presentations $T_{4,2,l}$ and $T_{4,3,l}$ .

If $l\in\{n-3,n-2\}$ then again $vy \in \{u^{n-3},u^{n-2}\}$. Recall
that here $k\in \{n-3,n-2\}$. The two semigroups in which one of $k$ and $l$
equals $n-3$ and the other one $n-2$ are anti-isomorphic. Hence this case 
leads to $6$ semigroups with presentations $T_{4,a,b,c}$ with $a,b,c \in \{2,3\}$
and $a\leq b$.

{\bf Case 5:} $vu=y$. Let $uv=u^k$ and $v^2=u^l$. It follows
$vy=vvu=u^{l+1}$ and $uy=uvu=u^{k+1}$. For $vu^2$ consider $uvu^2=u^{k+2}$.

If $2\leq k < (n-1)/2$ then $vu^2=u^{k+1}$. From
$u^{l+1}=vvu=vu^k=u^{2k-1}$ it follows that $l = 2k-2$ which leads to
$\lceil(n-1)/2\rceil -2$ semigroups with presentations $T_{5,k}$.

If $(n-1)/2\leq k \leq n-5$ then $l \in\{n-3,n-2\}$ which gives
$2(n-\lceil(n-1)/2\rceil-4)$ semigroups with presentations $T_{5,2,k}$
and $T_{5,3,k}$.

If $n-4\leq k\leq n-2$ then $vu^2\in\{u^{n-3},u^{n-2}\}$ and $l
\in\{n-3,n-2\}$, leading to $12$ semigroups with presentations
$T_{5,k,a,b}$ with $a,b\in\{2,3\}$.

{\bf Case 5':} $uv=y$. This case leads to the dual semigroups to
those from Case~5.

Only the transposition of the generators $u$ and $v$ might induce an
isomorphism between two of the semigroups considered above.
That no two semigroups from the same case are isomorphic follows
immediately, but there are two semigroups from different cases which
are isomorphic. These are the semigroups from Case 1 with $k=3$ and
from Case 4 with $l=2$. For all other semigroups the transposition of
$u$ and $v$ either induces an automorphism or an isomorphism to a
semigroup in which $|\langle u\rangle|\neq n-2$.
\end{proof}

Numbers of semigroups of coclass $2$ with minimal generating set of 
size $2$ and order less than $7$ are contained in Table~\ref{tab_computed}
in Section~\ref{sec_enum}. There are no such semigroups with less than
$5$ elements. No further considerations
will be undertaken for semigroups of orders $5$ or $6$ as
semigroups of these orders have long been known~\cite{MS55,Ple67} and
are available in the data library in \textsf{Smallsemi}~\cite{smallsemi}.

From Lemma~\ref{lem_coclass_d} we know certain types of semigroups
with coclass~$2$ and minimal generating set of size~$3$. The remaining types
for this case are given in the next theorem. To simplify the statement
of the theorem we introduce a total ordering $\prec$ on the
semigroups of coclass $1$ from Theorem~\ref{thm_cc1} given by their
order of appearance therein. 
Moreover if a semigroup $S$ allows a presentation with relations $R$
we denote the inverted relations that naturally yield a presentation of 
the dual semigroup by $R^{\perp}$ and the dual semigroup
itself by $S^{\perp}$. Note that a semigroup $S$ from
Theorem~\ref{thm_cc1} is commutative if and only if it is self-dual, that is
$S\cong S^{\perp}$.
 
\begin{theorem}
\label{thm_cc2_d3}
Given $n\in\N$ with $n\geq 6$ define for every two semigroups $V,
W\not\cong H_2$ of order $n-1$ from Theorem~\ref{thm_cc1} with $V \preceq
W$ and with presentations $V=\langle u,v\mid Q\rangle$
respectively $W=\langle u,w\mid R\rangle$ the following
presentation(s):
\begin{enumerate}
\item 
$\langle u,v,w \mid Q, R, vw= u^{k+l-1}, vw= wv\rangle$
if $k+l \leq n-2$;
\item
$\langle u,v,w \mid Q, R, vw= u^{n-i}, wv= u^{n-j}\rangle, 2\leq i\leq j \leq 3$ 
if $k+l \geq n-1$ and $(W\cong W^{\perp}$ or $V\cong W)$;
\item 
$\langle u,v,w \mid Q, R, vw= u^{n-i}, wv= u^{n-j}\rangle, 2\leq i,j \leq 3$
if $V\prec W$ and $W\not\cong W^{\perp}$;
\item
$\langle u,v,w \mid Q, R^{\perp}, vw= u^{n-i}, wv= u^{n-j}\rangle, 
2\leq i,j \leq 3$ if $V\not\cong V^{\perp}$ and $W\not\cong W^{\perp}$,
\end{enumerate}
where $k$ and $l$ are given by the relations
$uv=u^k$ and $uw=u^l$ in $Q$ respectively $R$.

The presentations obtained in this way together with the presentations from
Lemma~\ref{lem_coclass_d} for $c=n-3$ and $r=2$ form a complete list 
up to (anti-)isomorphism of representatives of nilpotent semigroups of order
$n$ and coclass $2$ whose minimal generating set has size $3$.

A semigroup from the above list is self-dual if and only if it is commutative
or is from (iv) with $V\cong W$.
\end{theorem}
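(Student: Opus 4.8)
The plan is to follow the blueprint of Lemma~\ref{lem_coclass_d}: first fix the underlying set, then reduce the whole multiplication to a few products among the generators, and only then do the (anti-)isomorphism bookkeeping. Let $S$ have order $n$, coclass $2$, and minimal generating set $S\setminus S^2=\{u,v,w\}$ of size three (Lemma~\ref{lem_gen}); its class is $c=n-3$. The three elements of $S\setminus S^2$ contribute an excess of $2$ to the count $\sum_{k=1}^c|S^k\setminus S^{k+1}|=n-1$, so the $c-1=n-4$ terms with $2\leq k\leq c$ sum to $n-4$ and must each equal $1$ (consistent with Lemma~\ref{lem_part_nil}(iii)). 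By Corollary~\ref{coro_bound}, applicable since $c=n-3\geq 3$ for $n\geq 6$, I may take $u$ to generate a subsemigroup of class $c$; then those singletons are exactly $\{u^k\}$, the set is $S=\{u,u^2,\dots,u^{n-2}\}\cup\{v,w\}$ with $u^{n-2}$ the zero, and every product of two elements is a power of $u$.

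First I would note that $V:=\langle u,v\rangle$ and $W:=\langle u,w\rangle$ both have order $n-1$, are $2$-generated, and contain a class-$c$ subsemigroup, hence have coclass $1$ and occur in Theorem~\ref{thm_cc1} with $n$ replaced by $n-1$. The exceptional isomorphism $V\cong H_2$ or $W\cong H_2$ is exactly the situation in which a second generator also spans a class-$c$ subsemigroup, so those semigroups already sit in Lemma~\ref{lem_coclass_d} for $r=2$; this both explains the hypothesis $V,W\not\cong H_2$ and, once the complementary case is exhausted, yields the asserted completeness. Writing $uv=u^k$ and $uw=u^l$, I claim the whole of $S$ is determined by $V$, $W$ and the two cross products $vw,wv$, since every other product collapses into a power of $u$ lying in $V$ or $W$.

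Next I would read off $vw$ and $wv$ from associativity. Since $u(vw)=(uv)w=u^kw=u^{k+l-1}$ and likewise $u(wv)=u^{k+l-1}$, left multiplication by $u$ carries both cross products to $u^{k+l-1}$. If $k+l\leq n-2$ this power is nonzero and has a unique preimage, so $vw=wv$ is forced; as such small exponents occur only for the commutative members of Theorem~\ref{thm_cc1} (the non-commutative $N_1,N_2$ have $uv$ equal to the zero), the resulting $S$ is commutative, giving family (i). If $k+l\geq n-1$ then $u^{k+l-1}$ is the zero $u^{n-2}$, and each of $vw,wv$ may independently equal $u^{n-2}$ or $u^{n-3}$, which is the source of the free exponents $i,j\in\{2,3\}$. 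A direct check --- any product of three generators is governed only by how many of $v,w$ it contains and lands among the top powers of $u$ --- shows that every such choice is associative.

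The step I expect to be the main obstacle is the reduction up to (anti-)isomorphism. Because $u$ is the only generator spanning a class-$c$ subsemigroup, every (anti-)isomorphism fixes $u$ and acts on $\{v,w\}$, so the sole symmetries are the isomorphism swapping $v\leftrightarrow w$, which sends the datum $(V,W,vw,wv)$ to $(W,V,wv,vw)$, and the anti-isomorphism passing to the dual, which sends it to $(V^{\perp},W^{\perp},wv,vw)$; these commute and generate a Klein four-group, and the task is to pick one representative per orbit. Here the fact --- recorded already in Theorem~\ref{thm_cc1} --- that the only non-self-dual coclass-$1$ semigroups are the mutually dual pair $N_1,N_2$, which moreover come last in the ordering $\prec$, keeps the analysis finite: normalising to $V\preceq W$ absorbs the swap, while the dual either (when $V,W$ are self-dual or $V\cong W$) merely reorders the pair $(vw,wv)$ and so collapses the four exponent choices to the three with $i\leq j$ of family (ii), or (when a factor is non-self-dual) genuinely turns $V,W$ into $V^{\perp},W^{\perp}$, forcing the separate families (iii) and (iv) that list, respectively, the pairing of $V$ with $W$ and the pairing with the dual factor $W^{\perp}$ (presented by $R^{\perp}$). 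Finally $S\cong S^{\perp}$ amounts to $(V^{\perp},W^{\perp},wv,vw)$ lying in the \emph{isomorphism} orbit of $(V,W,vw,wv)$, which happens precisely when $V,W$ are self-dual with $vw=wv$ (the commutative case) or when the two factors are mutually dual and symmetrically arranged (family (iv) with $V\cong W$); verifying that the four families are pairwise disjoint and jointly exhaustive, and that none duplicates a semigroup of Lemma~\ref{lem_coclass_d}, is the delicate bookkeeping that will take the most care.
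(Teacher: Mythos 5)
Your proposal follows the same route as the paper's own proof: reduction via Corollary~\ref{coro_bound} so that $\langle u\rangle$ has order $n-2$; the observation that a second class-$(n-3)$ generator occurs exactly when $V$ or $W$ is isomorphic to $H_2$ (equivalently $v^2=u^2$ or $w^2=u^2$), which is the case already covered by Lemma~\ref{lem_coclass_d}; the reduction of the whole multiplication to the data $V$, $W$, $vw$, $wv$; the associativity argument forcing $vw=wv$ when $k+l\le n-2$ and $vw,wv\in\{u^{n-3},u^{n-2}\}$ otherwise; and the observation that the only available symmetries are the transposition $v\leftrightarrow w$ and duality. (In passing: your derivation, like the paper's Case 1, actually gives $vw=wv=u^{k+l-2}$, while item (i) of the statement prints $u^{k+l-1}$; that off-by-one is a discrepancy between the paper's statement and its own proof, not between you and the paper.)

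The genuine gap is in the duality bookkeeping for cases (ii)--(iv), which you defer as ``delicate bookkeeping'' but which is precisely where the paper's proof does its real work. You assert that $N_1$ and $N_2$ form a \emph{mutually dual} pair. They do not: an anti-isomorphism fixes every power of a single element, in particular $v^2$, so $N_1^{\perp}$ (in which $v^2=u^{n-2}$ is not the zero) cannot be isomorphic to $N_2$ (in which $v^2$ is the zero). The correct fact, stated and used in the paper's proof (``$W$ can be non-commutative and anti-isomorphic to a semigroup from Theorem~\ref{thm_cc1}, in other words isomorphic to $N_1^{\perp}$ or $N_2^{\perp}$''), is that the duals of $N_1$ and $N_2$ are isomorphic to \emph{no} semigroup in the list of Theorem~\ref{thm_cc1}. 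This is exactly what makes family (iv) --- the pairing of $Q$ with $R^{\perp}$ --- produce semigroups that cannot be re-expressed inside families (ii) or (iii), and hence what makes the families disjoint. Under your premise, pairing $N_1$ with $N_2^{\perp}\cong N_1$ would coincide with pairing $N_1$ with $N_1$, so (iv) would duplicate (ii)/(iii) and the disjointness and count you postponed would come out wrong. A related slip: you attribute the collapse of the four exponent choices to the three with $i\le j$ in family (ii) to duality ``merely reordering $(vw,wv)$''; that is valid only when $V$ and $W$ are commutative. When $V\cong W\cong N_1$ (or $N_2$), the dual changes both factors to $N_1^{\perp}$, and it is the transposition $v\leftrightarrow w$ --- as in the paper's Case 2 --- that identifies $(i,j)$ with $(j,i)$. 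With the duality behaviour of $N_1,N_2$ corrected, your outline closes along the paper's lines; without it, the step you flagged as hardest would fail.
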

\begin{proof}
Let $S=\langle u,v,w\rangle$ be a semigroup of order $n, n\geq 6$, coclass
$2$. Due to Corollary~\ref{coro_bound} we may assume without loss of
generality that $|\langle u\rangle|=n-2$. 

If one of $v^2$ or $w^2$ equals $u^2$ then the conditions for
Lemma~\ref{lem_coclass_d} are satisfied and the presentations follow
from there. It remains to consider the case when neither
$v^2$ nor $w^2$ equal $u^2$. The proof follows a similar approach as
the one of Lemma~\ref{lem_coclass_d}. All eight products
$uv,vu,v^2,uw,wu,w^2,vw,$ and $wv$ of two generators not both equal to
$u$ are in the set $S^2 = \{u^k \mid 2\leq k \leq n-2\}$, and knowing
them uniquely determines $S$. The different choices are discussed
below. For each of the resulting multiplications the value of a
product with three factors will depend only on the number of times $v$
and $w$ appear, making all multiplications associative. The only
non-trivial permutation of generators possibly inducing an (anti-)isomorphism
between different multiplications is the transposition of $v$ and $w$.

Denote $V=\langle u,v\rangle$ and $W=\langle u,w\rangle$. Both $V$ and
$W$ are semigroups with $n-1$ elements and coclass $1$. The possible
choices for $V$ and $W$ up to (anti-)isomorphism are the semigroups listed
in Theorem~\ref{thm_cc1} except $H_2$ (taking into
account that $v^2,w^2 \neq u^2$), and the equations $uv=u^k$ and
$uw=u^l$ hold for some $3\leq k,l\leq n-2$. Without loss of generality
we may assume that $V$ is isomorphic to a semigroup from Theorem~\ref{thm_cc1}
and that $V\preceq W$. The latter avoids considering isomorphic semigroups
under the transposition of $v$ and $w$, except if $V$ and $W$ are of the same type.

{\bf Case 1:} $k+l\leq n-2$. From $uvw=uwv=u^{k+l-1}$ it follows that
$vw=wv=u^{k+l-2}$. Using the latter equation as relation together with the
relations of $V$ and $W$ yields the relations for a presentation of
$S$.

{\bf Case 2:} $k+l\geq n-1$. From $uvw=uwv=u^{n-2}$ it follows that
$vw,wv\in\{u^{n-3},u^{n-2}\}$. If $W$ and hence $V$ are commutative 
then the two choices with $vw\neq wv$ lead to a pair of anti-isomorphic
semigroups; and if $V\cong W$ then these two choices lead to a pair of isomorphic
semigroups (under the transposition of $v$ and $w$). Hence in both cases
there are three choices up to (anti-)isomorphism with presentations as given
in (ii). For all further considerations we have $V\prec W$. If in addition $W$
is isomorphic to a semigroup from Theorem~\ref{thm_cc1}, but not commutative then
all four choices for $vw,wv\in\{u^{n-3},u^{n-2}\}$ yield not (anti-)isomorphic
semigroups with presentations as given in (iii). Finally $W$ can be non-commutative
and anti-isomorphic to  a semigroup from Theorem~\ref{thm_cc1}, in other words
isomorphic to $N_1^{\perp}$ or $N_2^{\perp}$. For commutative $V$ this yields semigroups
that are anti-isomorphic to those in (iii). If $V$ is non-commutative then all
four choices for $vw$ and $wv$ yield not (anti-)isomorphic
semigroups with presentations as given in (iv).

The semigroup $S$ is self-dual if and only if it is commutative or the
transposition of $v$ and $w$ induces an isomorphism from $S$ to $S^{\perp}$.
In the latter case $V\cong W^{\perp}$ is required. For commutative $V$ this
yields $V\cong W$ and hence a presentation from (ii). Following the considerations
from above these semigroups are not self-dual. This leaves semigroups from (iv)
where it is easy to verify that the condition $V\cong W^{\perp}$ is sufficient
for a semigroups to be self-dual.
\end{proof}

Numbers of semigroups of coclass $2$ with minimal generating set of 
size $3$ and order less than $6$ are contained in Table~\ref{tab_computed}
in Section~\ref{sec_enum}. The only such semigroup with at most $4$
elements is the zero semigroup of order $4$. No further considerations
will be undertaken for semigroups of order $5$ as
semigroups of this order have long been known~\cite{MS55} and are
available in the data library \textsf{Smallsemi}~\cite{smallsemi}.

The strategy in the proof of the previous theorem can be extended to
inductively determine semigroups of coclass $r$ and generating set of
size $r+1$, the maximal possible. Let $S$ be a semigroup of class $c$ and coclass
$r$ with minimal generating set $\langle
u_1,\dots,u_k,v_{k+1},\dots,v_{r+1}\rangle$ where 
each $u_i, 1\leq i\leq k$ generates a semigroup of class $c$ but none
of the $v_i, k+1\leq i\leq r+1$ does. If we define $V=\langle
u_1,\dots,u_k,v_{k+1},\dots,v_{r}\rangle$ and $W=\langle
u_1,\dots,u_k,v_{r+1}\rangle$ then $V$ would be
known by induction hypothesis and $W$ is one of the semigroups from 
Lemma~\ref{lem_coclass_d}. Hence every such $S$ can be constructed
from a known semigroup with one fewer element. The possible values for
the products
\begin{equation}
\label{eq_products}
v_iv_{r+1}\mbox{ \ and \ }v_{r+1}v_i\mbox{ \ for \ }k+1\leq i\leq r
\end{equation}
 in a
semigroup constructed from $V$ and $W$ are either fixed by
associativity or equal to one of $u_1^c$ and $u_1^{c+1}$. It remains
to avoid (anti-)isomorphic copies of the same semigroup. A
first step towards an orderly algorithm (see~\cite{Rea78}) for the
construction is to define an ordering $\prec$ on the semigroups
listed in Lemma~\ref{lem_coclass_d} and require
\[
\langle u_1,\dots,u_k,v_{i}\rangle \preceq \langle
u_1,\dots,u_k,v_{j}\rangle \mbox{ \ for all \ }k+1\leq i\leq j\leq r+1. 
\]
If $W$ is not (anti-)isomorphic to any of the subsemigroups of $V$ then the
group of (anti-)auto\-mor\-phisms of $V$ determines which choices for the
products in \eqref{eq_products} lead to (anti-)isomorphic
semigroups. Depending on the size of the group the corresponding orbit
calculations might be hard. The situation becomes even more difficult if
$W$ is of the same type like $\langle u_1,\dots,u_k,v_{r}\rangle$ as
permutations leading to isomorphisms to another
constructed semigroup may move $v_{r+1}$. These problems prevent for
the moment that an algorithm usable in practice can be derived from
this inductive approach. 

To classify all semigroups with coclass $3$ the methods presented in
Section~\ref{sec_cc1} and in this section will
need to be extended. For a semigroup $S$ of coclass $3$ it is not
necessarily true that $S^3\setminus S^4$ contains only one element, while
the proofs of Lemma~\ref{lem_coclass_d} and
Theorems~\ref{thm_cc2_d2} and~\ref{thm_cc2_d3} rely on this as a key fact.

\section{Enumeration}
\label{sec_enum}
\noindent
We present formulae for the numbers of semigroups of coclasses $1$ or
$2$ up to (anti-)isomorphism and up to isomorphism,
and formulae for the numbers of commutative semigroups of coclasses $1$ 
or $2$ up to isomorphism. The results are obtained by counting the presentations
in the respective theorems in the previous sections. For small orders the
formulae have been verified computationally and the computed numbers are
presented.

The enumeration for coclass $1$ follows from Theorem~\ref{thm_cc1}.
\begin{corollary}
\label{coro_cc1}
For $n\in\N$ with $n \geq 5$ the number of nilpotent semigroups of
order $n$ and coclass $1$ \dots
\begin{enumerate}
\item \dots counting up to (anti-)isomorphism equals $n+\lfloor n/2 \rfloor$.
\item \dots counting up to isomorphism equals $n+\lfloor n/2 \rfloor+2$.
\item \dots counting commutative semigroups up to isomorphism equals 
$n+\lfloor n/2 \rfloor-2$.
\end{enumerate}
\end{corollary}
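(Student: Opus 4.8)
The plan is to read off all three counts directly from the complete list of representatives in Theorem~\ref{thm_cc1}, which classifies the nilpotent semigroups of order $n$ and coclass $1$ up to (anti-)isomorphism for $n\geq 5$. The entire argument is a careful tally of the entries in that list together with the self-duality information recorded there; no new structural input is required.

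First I would establish part (i) by counting the (anti-)isomorphism classes. The family $H_k$ with $2\leq k\leq n-1$ contributes $n-2$ representatives, and the family $J_k$ with $n/2<k\leq n-1$ contributes $\lfloor (n-1)/2\rfloor$ representatives; the presentation $X$ contributes one further representative precisely when $n$ is even, while $N_1$ and $N_2$ always contribute two more. Summing these and splitting into the cases $n$ even and $n$ odd to evaluate the floor functions, I would verify that the total collapses to $n+\lfloor n/2\rfloor$ in both parities. The only place a little care is needed is the parity-dependent interaction between the $J_k$-count and the presence or absence of $X$: in the even case the extra representative $X$ exactly compensates for the smaller $J_k$-range, so the answer is parity-independent.

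For part (ii) I would pass from (anti-)isomorphism classes to isomorphism classes. A self-dual semigroup forms a single isomorphism class, whereas each non-self-dual (anti-)isomorphism class, represented by some $S$ in the list, corresponds to two distinct isomorphism classes $S$ and $S^{\perp}$. By Theorem~\ref{thm_cc1} the only non-self-dual representatives are $N_1$ and $N_2$, so exactly two of the classes split; hence the isomorphism count exceeds the (anti-)isomorphism count of (i) by $2$, giving $n+\lfloor n/2\rfloor+2$.

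For part (iii) I would identify the commutative semigroups in the list. The presentations $H_k$, $J_k$, and $X$ are commutative by construction, while $N_1$ and $N_2$ satisfy $uv=u^{n-1}\neq u^{n-2}=vu$ and are therefore not commutative. Thus the commutative representatives are exactly those counted in (i) with $N_1$ and $N_2$ removed; since commutative semigroups are self-dual, counting them up to isomorphism coincides with counting them up to (anti-)isomorphism, and removing the two non-commutative entries from the total of (i) yields $n+\lfloor n/2\rfloor-2$. The main obstacle throughout is purely bookkeeping: tracking the parity cases in the $J_k$- and $X$-counts so that all three formulae emerge in the stated parity-independent form.
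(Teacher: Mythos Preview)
Your proposal is correct and takes essentially the same approach as the paper: the paper simply states that the enumeration follows from Theorem~\ref{thm_cc1}, and you carry out exactly this tally of the families $H_k$, $J_k$, $X$, $N_1$, $N_2$ together with the self-duality information. Your bookkeeping (the $\lfloor(n-1)/2\rfloor$ count for $J_k$, the parity-dependent appearance of $X$, and the identification of $N_1,N_2$ as the only non-self-dual and non-commutative entries) is accurate and in fact more explicit than what the paper records.
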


For coclass $2$ we shall determine the formulae depending on the size
of the minimal generating set. If the set has size $2$ we need to count the
presentations in Theorem~\ref{thm_cc2_d2}.

\begin{corollary}
For $n\in\N$ with $n \geq 7$ the number of nilpotent semigroups of order $n$,
coclass $2$ and minimal generating set of size $2$ \dots
\begin{enumerate}
\item \dots counting up to (anti-)isomorphism equals
$5n+\lfloor n/2\rfloor -\lceil n/3\rceil -1$.
\item \dots counting up to isomorphism equals
$7n -\lceil n/3\rceil +5$.
\item \dots counting commutative semigroups up to isomorphism equals
$3n+2\lfloor n/2\rfloor -\lceil n/3\rceil -8$.
\end{enumerate}
\end{corollary}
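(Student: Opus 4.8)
The plan is to enumerate the presentations from Theorem~\ref{thm_cc2_d2} by grouping them according to the five cases (together with the dual cases) in its proof, counting each family as a function of $n$, and then summing. Since the theorem already asserts that the listed presentations form a complete, irredundant list up to (anti-)isomorphism, the count up to (anti-)isomorphism is simply the total number of distinct presentations, with one correction: the proof notes that the Case~1 semigroup with $k=3$ coincides with the Case~4 semigroup with $l=2$, so I would be careful to count this semigroup only once. Concretely, $T_{1,i}$ contributes $2$ (for $i\in\{2,3\}$, corresponding to the three values $k\in\{3,n-3,n-2\}$ in Case~1, but one of these, $k=3$, is the repeated semigroup); $T_{2,k}$ contributes $\lceil n/2\rceil-3$ and $T_{2,i,k}$ contributes $3(\lfloor n/2\rfloor-1)$; Cases~3 and~3' together with their presentations $T_3,T_{3,i}$ contribute $6$; $T_{4,k}$ contributes $\lceil n/3\rceil-2$, $T_{4,i,k}$ contributes $2(n-3-\lceil n/3\rceil)$, and $T_{4,i,j,k}$ contributes $6$; and the Case~5/5' families $T_{5,k},T_{5,i,k},T_{5,i,j,k}$ contribute $\lceil(n-1)/2\rceil-2$, $2(n-4-\lceil(n-1)/2\rceil)$, and $12$ respectively. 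Adding these, simplifying the ceiling and floor expressions using $\lceil n/2\rceil+\lfloor n/2\rfloor=n$ and $\lceil(n-1)/2\rceil=\lfloor n/2\rfloor$, and subtracting $1$ for the single coincidence, I expect to reach $5n+\lfloor n/2\rfloor-\lceil n/3\rceil-1$, establishing (i).

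For (ii), the count up to isomorphism, I would start from the (anti-)isomorphism count and add one extra representative for each self-dual class that is \emph{not} self-dual, i.e.\ for each semigroup in the list whose dual is a genuinely different semigroup. The theorem states that a semigroup is self-dual exactly when it is commutative or equals $T_{5,2}$. The commutative families are those whose defining relations force $uv=vu$, namely $T_{1,i}$, $T_{2,k}$, $T_{2,i,k}$, $T_{4,k}$, and $T_{4,i,k}$; every other listed semigroup comes from one of the asymmetric cases (3, 3', 5, 5', and $T_{4,i,j,k}$ with $i\neq j$) and is part of an anti-isomorphic pair. The extra count is therefore the number of non-self-dual (anti-)isomorphism classes, which is the total from (i) minus the number of self-dual classes. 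I would count the self-dual classes separately and verify that $\text{(ii)}=\text{(i)}+(\text{non-self-dual classes})$ simplifies to $7n-\lceil n/3\rceil+5$.

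For (iii), counting commutative semigroups up to isomorphism, I would simply tally the commutative families. A semigroup in the list is commutative precisely when its presentation imposes $uv=vu$; from the case analysis these are $T_{1,i}$, $T_{2,k}$, $T_{2,i,k}$, $T_{4,k}$, and $T_{4,i,k}$ (while $T_{4,i,j,k}$ is commutative only in the degenerate instances with $i=j$, and none of the Case~3,3',5,5' semigroups is commutative). Summing the corresponding counts — again remembering to remove the single Case~1/Case~4 coincidence if it falls among the commutative semigroups, which it does — and simplifying should yield $3n+2\lfloor n/2\rfloor-\lceil n/3\rceil-8$.

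The main obstacle will be bookkeeping rather than conceptual difficulty: each family is indexed by a range of $k$ whose length depends on a floor or ceiling of $n/2$ or $n/3$, and the ranges were derived under case hypotheses like $2\le k<n/2$ versus $n/2\le k\le n-2$, so I must make sure the integer-valued ranges partition the admissible exponents with no gaps or overlaps and that boundary values (especially the $k=3$ coincidence and the $T_{5,2}$ self-dual exception) are attributed to exactly one family. Verifying that the three resulting formulae are mutually consistent — that (i) plus the non-self-dual count gives (ii), and that (iii) counts a genuine subcollection — provides a useful internal check, and matching all three against the computed values in Table~\ref{tab_computed} for the first few orders $n\geq 7$ gives the final confirmation.
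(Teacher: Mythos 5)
Your strategy is the same as the paper's: its proof of this corollary consists precisely of totalling the per-case counts recorded in the proof of Theorem~\ref{thm_cc2_d2} and observing that the commutative semigroups are exactly those whose presentation contains the relation $uv=vu$ together with the $T_{4,i,j,k}$ having $i=j$; your reduction of (ii) to (i) plus the number of non-self-dual classes (self-dual $=$ commutative $\cup\{T_{5,2}\}$) is also sound and correct. However, your concrete tally contains two bookkeeping errors, and since this corollary is nothing but bookkeeping, they are fatal as written: the numbers you list do not sum to the stated formulae.

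First, you let ``Cases 3 and 3$'$ together'' contribute $6$ to the count up to (anti-)isomorphism. Case 3$'$ produces exactly the duals (anti-isomorphic copies) of the Case 3 semigroups, so up to (anti-)isomorphism it contributes nothing new; the list contains only the three presentations $T_3$, $T_{3,2}$, $T_{3,3}$. (You treat Case 5/5$'$ correctly, i.e.\ without doubling, so your accounting is internally inconsistent; the value $6$ would be correct only in the isomorphism count of part (ii).) Second, your subtraction of $1$ for the Case 1/Case 4 coincidence is spurious: that coincidence is already absorbed into the theorem's list, since $T_{1,i}$ ranges only over $i\in\{2,3\}$ (the Case 1 semigroups with $v^3=u^{n-3},u^{n-2}$) and the $k=3$ semigroup occurs exactly once, as $T_{4,2}$. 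Having already counted $T_{1,i}$ as $2$, subtracting a further $1$ undercounts; you repeat this subtraction in (iii). The net effect is that your tally for (i) equals $5n+\lfloor n/2\rfloor-\lceil n/3\rceil+1$ (too large by $2$) and your tally for (iii) equals $3n+2\lfloor n/2\rfloor-\lceil n/3\rceil-9$ (too small by $1$): for $n=7$ these give $36$ and $15$, whereas Table~\ref{tab_computed} (and the correct sums) give $34$ and $16$. Dropping the doubling of Case 3$'$ and the extra subtraction, your plan coincides with the paper's proof and yields all three formulae.
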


\begin{proof}
For each case considered in the proof of Theorem~\ref{thm_cc2_d2} also the 
number of semigroups has been given. It remains to note that the presentations
that yield a commutative semigroup
either have $uv=vu$ as relation or are $T_{4,i,j,k}$ with $i=j$.
\end{proof}

The enumeration of semigroups of coclass $2$ with minimal generating set of
size $3$ follows from Lemma~\ref{lem_coclass_d} and Theorem~\ref{thm_cc2_d3}.
\begin{lemma}
For $n\in\N$ with $n \geq 6$ the number of nilpotent semigroups of
order $n$, coclass $2$ and minimal generating set of size $3$ \dots
\begin{enumerate}
\item \dots counting up to (anti-)isomorphism equals
\begin{eqnarray*}
&\frac{1}{8}(21n^2+22n-96) &\textrm{ if $n$ is even, and}\\
&\frac{1}{8}(21n^2+36n-81) &\textrm{ if $n$ is odd.}
\end{eqnarray*}
\item \dots counting up to isomorphism equals
\begin{eqnarray*}
&\frac{1}{8}(27n^2+94n-280) &\textrm{ if $n$ is even, and}\\
&\frac{1}{8}(27n^2+112n-243) &\textrm{ if $n$ is odd.}
\end{eqnarray*}
\item \dots counting commutative semigroups up to isomorphism equals
\begin{eqnarray*}
&\frac{1}{8}(15n^2-58n+24) &\textrm{ if $n$ is even, and}\\
&\frac{1}{8}(15n^2-48n+9) &\textrm{ if $n$ is odd.}
\end{eqnarray*}
\end{enumerate}
\end{lemma}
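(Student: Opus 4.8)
The plan is to count the two disjoint families of presentations whose union constitutes the list being enumerated: the presentations $\mathcal{H}_k,\mathcal{J}_k,\mathcal{X},\mathcal{N}^e_{k,l,m}$ supplied by Lemma~\ref{lem_coclass_d} with $r=2$ and $c=n-3$, and the presentations produced by the four clauses of Theorem~\ref{thm_cc2_d3}. Because Theorem~\ref{thm_cc2_d3} treats exactly the semigroups in which neither $v^2$ nor $w^2$ equals $u^2$, while Lemma~\ref{lem_coclass_d} covers the complementary case of at least two class-$c$ generators, the two sources are disjoint and their presentations are counted without overlap. Adding the two counts yields the formula in (i); the formulae in (ii) and (iii) are then extracted from (i) by a self-duality bookkeeping and by restricting to the commutative presentations respectively.

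For the Lemma~\ref{lem_coclass_d} contribution I would first note that with $r=2$ the admissible triples $(k,l,m)$ for $\mathcal{N}^e_{k,l,m}$ are independent of $n$: enumerating $0\le k\le m\le 2$ with $k\le l\le\lfloor(k+m)/2\rfloor$ gives a fixed finite list, each doubled by $e\in\{0,1\}$, hence a constant contribution. The families $\mathcal{H}_k$ (range $2\le k\le c-1$) and $\mathcal{J}_k$ (range $\lfloor c/2\rfloor+2\le k\le c-1$) contribute counts linear in $n$ after substituting $c=n-3$, and $\mathcal{X}$ contributes $1$ precisely when $c=n-3$ is even, that is when $n$ is odd. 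The $\mathcal{J}_k$ range and $\mathcal{X}$ thus already force the split into the cases ``$n$ even'' and ``$n$ odd'' appearing in the statement.

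The quadratic main term comes from Theorem~\ref{thm_cc2_d3}, and this is where the real work lies. Here one sums over ordered pairs $V\preceq W$ of coclass-$1$ semigroups of order $n-1$, taken from the (anti-)isomorphism representatives of Theorem~\ref{thm_cc1} with $H_2$ removed; by Corollary~\ref{coro_cc1} applied to order $n-1$ there are $(n-1)+\lfloor(n-1)/2\rfloor-1$ such representatives, so the number of pairs is quadratic in $n$. I would record for each representative $V$ its exponent $k=k(V)$ with $uv=u^{k}$ and whether $V$ is self-dual (distinguishing $N_1,N_2$ from the rest), then partition the pairs according to whether $k(V)+k(W)\le n-2$ or $k(V)+k(W)\ge n-1$. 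Pairs in the first regime contribute one presentation each by clause~(i); pairs in the second contribute $3$ presentations via clause~(ii) when $W$ is self-dual or $V\cong W$, and $4$ from clause~(iii) plus a further $4$ from clause~(iv) (which replaces $W$ by $W^{\perp}$) when $V$ or $W$ is one of the non-self-dual semigroups $N_1,N_2$. The main obstacle is organising this double sum: for each parity of $n$ one must count the pairs on each side of the threshold $k+l=n-2$, weighted by the multiplicities $1,3,4,8$ dictated by the clauses, where the dominant part is a triangular sum in the exponents producing the $n^2$ term, while the finitely many pairs involving $N_1,N_2$ receive the enlarged multiplicities. Care is needed to respect $V\preceq W$, to avoid double-counting pairs with $V\cong W$, and to reconcile the half-integer threshold and the existence of $X$ with the $\lfloor\cdot\rfloor,\lceil\cdot\rceil$ in each parity class.

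With (i) established, part (ii) follows from the identity (number up to isomorphism) $=$ (number up to (anti-)isomorphism) $+$ (number of non-self-dual (anti-)isomorphism classes), since each self-dual class is a single isomorphism class while each non-self-dual class splits into two. The non-self-dual classes are read off from the self-duality statements already proved: among the Lemma~\ref{lem_coclass_d} presentations they are the $\mathcal{N}^e_{k,l,m}$ with $l\neq(k+m)/2$, and among the Theorem~\ref{thm_cc2_d3} presentations they are exactly the non-commutative ones other than those from clause~(iv) with $V\cong W$. Part (iii) is obtained by re-running the count while retaining only the commutative presentations, namely $\mathcal{H}_k,\mathcal{J}_k,\mathcal{X}$ and the $\mathcal{N}^e_{k,l,m}$ with $k=l=m$, together with the Theorem~\ref{thm_cc2_d3} presentations built from two commutative $V,W$ with the symmetric choice $vw=wv$; as commutative semigroups are self-dual, this commutative isomorphism count coincides with the commutative (anti-)isomorphism count. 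I expect the consistent handling of these self-dual and commutative subfamilies across the two parities to be the most error-prone part of the argument.
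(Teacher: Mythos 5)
Your overall strategy is the same as the paper's: part (i) is obtained by adding the Lemma~\ref{lem_coclass_d} count (for $r=2$, $c=n-3$) to a clause-by-clause count of the Theorem~\ref{thm_cc2_d3} presentations over pairs $V\preceq W$, and parts (ii), (iii) by self-duality and commutativity bookkeeping (the paper itself only writes out part (i), and sketchily). However, your multiplicity bookkeeping for the pairs is wrong as stated. Clause (iv) requires \emph{both} $V\not\cong V^{\perp}$ and $W\not\cong W^{\perp}$, and clause (iii) requires $V\prec W$ strictly; hence a pair (commutative $V$, $W\in\{N_1,N_2\}$) contributes $4$ (clause (iii) only), a pair $(N_i,N_i)$ contributes $3+4=7$ (clauses (ii) and (iv)), and only $(N_1,N_2)$ contributes $4+4=8$. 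Your rule ``$4$ from (iii) plus a further $4$ from (iv) when $V$ \emph{or} $W$ is one of $N_1,N_2$'' assigns $8$ to all of these, and the weight $7$ is absent from your list $1,3,4,8$. This is not a cosmetic slip: the pairs with exactly one entry in $\{N_1,N_2\}$ are not ``finitely many'' as you claim --- there are $2\bigl((n-1)+\lfloor (n-1)/2\rfloor-3\bigr)$ of them, roughly $3n$ --- so overcounting them by $4$ each corrupts the \emph{linear} coefficient of the formula, not merely the constant. (Check at $n=7$: the correct totals are $21\cdot 3+12\cdot 4+(7+8+7)=133$ from Theorem~\ref{thm_cc2_d3} plus $17$ from Lemma~\ref{lem_coclass_d}, giving the required $150$.)

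There is a second, deeper failure in your plan for part (ii). You propose to read the non-self-dual classes off the self-duality statement of Theorem~\ref{thm_cc2_d3}, but that statement is incompatible with the formula you are trying to prove. The clause (ii) presentations with $V\cong W$ commutative and $vw\neq wv$ (the choice $(i,j)=(2,3)$) \emph{are} self-dual: since $Q=R$ and $Q$ is closed under reversal, reversing all products and then transposing $v$ and $w$ carries the presentation to itself. (The proof of Theorem~\ref{thm_cc2_d3} itself records that for such pairs the two choices with $vw\neq wv$ are simultaneously isomorphic and anti-isomorphic, which forces self-duality, despite the theorem's contrary assertion.) There are $n-1$ such classes for odd $n$ and $n-2$ for even $n$, so your recipe ``number up to isomorphism $=$ number up to (anti-)isomorphism $+$ number of non-self-dual classes read off the theorems'' yields, e.g., $239$ at $n=7$, whereas the stated formula and the computed Table~\ref{tab_computed} give $233$. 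To prove part (ii) you must therefore redo the self-duality analysis (adding these clause (ii) classes to the commutative ones and the clause (iv) classes with $V\cong W$) rather than quote the theorem; this correction is genuinely needed and cannot be found in the paper either, since the paper omits the proofs of (ii) and (iii) entirely. Your part (iii) plan, by contrast, is sound, since it uses only commutativity and not the flawed self-duality claim.
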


\begin{proof}
We shall first count those semigroups that arise from the presentations
in Lemma~\ref{lem_coclass_d}. We have $r=2$ and hence $c=n-3$ which leads
to $n-5$ presentations of type $\mathcal{H}_k$, a joint $n-4-\lfloor n/2\rfloor$
presentations of type $\mathcal{J}_k$ or $\mathcal{X}$, and $14$ of type
$\mathcal{N}_{k,l,m}^e$. 

To count the presentations listed in Theorem~\ref{thm_cc2_d3} we calculate
that there is a total of 
\[
\sum_{i=1}^{n-1+\lfloor \frac{n-1}{2} \rfloor-1}i = \frac{1}{2}\left(
  \left(n + \left\lfloor \frac{n-1}{2} \right\rfloor \right)^2 - 3
  \left(n + \left\lfloor\frac{n-1}{2} \right\rfloor\right) \right)+ 1
\]
choices for $V$ and $W$ with $V\preceq W$. Each pair fulfils only one
of the conditions from (i), (ii) and (iii) in Theorem~\ref{thm_cc2_d3}.
There are
\[
\sum_{k=3}^{\lceil \frac{n-1}{2} \rceil+1} \left(\sum_{l=k}^{n-k-2}1 +
\sum_{l=\lceil\frac{n-1}{2}\rceil}^{n-k-2}1\right) = \sum_{k=3}^{\lceil
  \frac{n-1}{2} \rceil+1} \left(n-2k-1\right) +
\left(n-\left\lceil\frac{n-1}{2}\right\rceil-k-1\right)
\]
choices that lead to the $1$ presentation from (i) and 
\[
\left(n+\lceil n/2\rceil -5 + n+\lceil n/2\rceil -4\right) 
=8n-8\lceil n/2 \rceil -36 
\]
that lead to the $4$ presentations from (iii). All other pairs fulfil the
condition in (ii) and hence lead to those $3$ presentations. The pairs of
non-commutative $V$ and $W$ fulfil the conditions in
(iv) and each one leads to $3$ presentations. This yields 
and additional $4\cdot 3=12$ presentations from (iv).

Simplifying the sum of all presentations separately for even and
odd integer yields the stated formulae up to (anti-)isomorphism.

The remaining formulae are obtained in a similar way and the proof
is left out.
\end{proof}

\begin{corollary}
For $n \geq 7$ the number of nilpotent semigroups of order $n$,
coclass $2$ \dots
\begin{enumerate}
\item \dots counting up to (anti-)isomorphism equals
\begin{eqnarray*}
&\frac{1}{8}(21n^2+66n-104) - \left\lceil \frac{n}{3} \right\rceil 
&\textrm{ if $n$ is even, and}\\
&\frac{1}{8}(21n^2+80n-93) - \left\lceil \frac{n}{3} \right\rceil 
&\textrm{ if $n$ is odd.}
\end{eqnarray*}
\item \dots counting up to isomorphism equals
\begin{eqnarray*}
&\frac{1}{8}(27n^2+150n-240) - \left\lceil \frac{n}{3} \right\rceil 
&\textrm{ if $n$ is even, and}\\
&\frac{1}{8}(27n^2+168n-203) - \left\lceil \frac{n}{3} \right\rceil 
&\textrm{ if $n$ is odd.}
\end{eqnarray*}
\item \dots counting commutative semigroups up to isomorphism equals
\begin{eqnarray*}
&\frac{1}{8}(15n^2-26n-40) - \left\lceil \frac{n}{3} \right\rceil 
&\textrm{ if $n$ is even, and}\\
&\frac{1}{8}(15n^2-16n-71) - \left\lceil \frac{n}{3} \right\rceil 
&\textrm{ if $n$ is odd.}
\end{eqnarray*}
\end{enumerate}
\end{corollary}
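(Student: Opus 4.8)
The plan is to obtain each of the three formulae by adding together the counts already established for the two possible sizes of a minimal generating set. Recall from the discussion opening Section~\ref{sec_cc2} that, by Lemmas~\ref{lem_part_nil}(i) and~\ref{lem_nil_mono}, a nilpotent semigroup of coclass~$2$ has a minimal generating set of size either~$2$ or~$3$, and these two possibilities are mutually exclusive. Hence, for each of the three counting conventions, the total number of such semigroups of order~$n$ is exactly the number with generating set of size~$2$ plus the number with generating set of size~$3$. The hypothesis $n\geq 7$ is precisely what guarantees that both contributing results apply (the size-$2$ count requires $n\geq 7$, the size-$3$ count only $n\geq 6$).

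First I would invoke the preceding Corollary for the size-$2$ contribution and the preceding Lemma for the size-$3$ contribution, in each of the three flavours (up to (anti-)isomorphism, up to isomorphism, and commutative up to isomorphism). For the first flavour, for instance, the size-$2$ count is $5n+\lfloor n/2\rfloor-\lceil n/3\rceil-1$ while the size-$3$ count is the quadratic $\tfrac{1}{8}(21n^2+22n-96)$ for even~$n$ and $\tfrac{1}{8}(21n^2+36n-81)$ for odd~$n$.

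The remaining step is pure arithmetic. I would split into the cases $n$ even and $n$ odd so that the floor $\lfloor n/2\rfloor$ in the size-$2$ formula can be replaced by $n/2$ respectively $(n-1)/2$, bring the resulting linear size-$2$ term over the common denominator~$8$, and add it to the size-$3$ quadratic. The term $-\lceil n/3\rceil$ does not interact with any polynomial part, so it is simply carried through to the final expression. Carrying this out for even~$n$ gives $\tfrac{1}{8}(21n^2+66n-104)-\lceil n/3\rceil$ and for odd~$n$ gives $\tfrac{1}{8}(21n^2+80n-93)-\lceil n/3\rceil$, matching~(i); the computations for~(ii) and~(iii) are entirely analogous, using the corresponding size-$2$ and size-$3$ formulae.

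There is essentially no genuine obstacle here beyond bookkeeping, since the two ingredient counts have already done all the structural work. The only point requiring care is matching the parity conventions: the floor term $\lfloor n/2\rfloor$ from the size-$2$ formula must be resolved consistently with the even/odd split already built into the statement of the size-$3$ formula, so that the two pieces are combined under the same case distinction before simplification.
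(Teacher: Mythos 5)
Your decomposition is exactly the paper's (implicit) proof: the corollary is stated without proof precisely because it follows by adding, for each of the three counting conventions, the size-$2$ count from the preceding corollary to the size-$3$ count from the preceding lemma, after resolving $\lfloor n/2\rfloor$ under the same even/odd split used in the size-$3$ formulae. For parts (i) and (ii) your arithmetic checks out; for instance, for odd $n$,
\[
5n+\tfrac{n-1}{2}-\left\lceil \tfrac{n}{3}\right\rceil-1+\tfrac{1}{8}(21n^2+36n-81)
=\tfrac{1}{8}(21n^2+80n-93)-\left\lceil \tfrac{n}{3}\right\rceil,
\]
as stated in (i).

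However, your claim that part (iii) comes out ``entirely analogously'' and matches the statement is false for odd $n$, and this matters because it shows the verification was asserted rather than performed. The size-$2$ commutative count for odd $n$ is $3n+(n-1)-\lceil n/3\rceil-8=4n-9-\lceil n/3\rceil$, and adding the size-$3$ count $\tfrac{1}{8}(15n^2-48n+9)$ gives
\[
\tfrac{1}{8}(15n^2-16n-63)-\left\lceil \tfrac{n}{3}\right\rceil,
\]
which differs by exactly $1$ from the printed $\tfrac{1}{8}(15n^2-16n-71)-\lceil n/3\rceil$. The computed data in Table~\ref{tab_computed} confirm that the summed formula, not the printed one, is correct: for $n=7$ the table gives $16+51=67$ commutative semigroups of coclass $2$, and indeed $\tfrac{1}{8}(15\cdot 49-16\cdot 7-63)-3=67$, whereas the printed formula yields $66$ (similarly $n=9$: table $123$ versus printed $122$). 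So the statement you were asked to prove contains a typo in (iii) for odd $n$, and an honest execution of your own plan would have discovered it; glossing over the computation with ``analogous'' is the one genuine defect in your write-up.
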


For small orders the semigroups of coclass $1$ or $2$ where determined
using the code from~\cite[Appendix C]{Dis10}. The numbers are given in 
Table~\ref{tab_computed}. They coincide, where applicable, with the
results obtained from the formulae given in this section.

\begin{table}[ht]
\centering
\caption{Numbers of semigroups with coclass $1$ or $2$}
\begin{tabular}{lrrrrrrrrrrr}
\toprule
{\bf type\hfill$\backslash$\hfill order} 
&{\bf 3} &{\bf 4} &{\bf 5} &{\bf 6} &{\bf 7} &{\bf 8} &{\bf 9} 
&{\bf 10} &{\bf 11} &{\bf 12} &{\bf 13}\\
\midrule
\multicolumn{12}{c}{{\it up to (anti-)isomorphism}}\\
\midrule
coclass $1$ 
&1 &8 &7 &9 &10 &12 &13 &15 &16 &18 &19\\
coclass $2$
&0 &1 &84 &142 &184 &218 &288 &328 &412 &460 &557\\
--, 2-generated
&0 &0 &11 &43 &34 &40 &45 &50 &55 &61 &65\\
--, 3-generated
&0 &1 &73 &99 &150 &178 &243 &278 &357 &399 &492\\
\midrule
\multicolumn{12}{c}{{\it up to isomorphism}}\\
\midrule
coclass $1$ 
&1 &9 &9 &11 &12 &14 &15 &17 &18 &20 &21\\
coclass $2$
&0 &1 &118 &219 &284 &333 &434 &491 &610 &677 &813\\
--, 2-generated
&0 &0 &15 &62 &51 &58 &65 &71 &78 &85 &91\\
--, 3-generated
&0 &1 &103 &157 &233 &275 &369 &420 &532 &592 &722\\
\midrule
\multicolumn{12}{c}{{\it commutative up to isomorphism}}\\
\midrule
coclass $1$ 
&1 &5 &5 &7 &8 &10 &11 &13 &14 &16 &17\\
coclass $2$
&0 &1 &23 &42 &67 &86 &123 &146 &193 &222 &278\\
--, 2-generated
&0 &0 &4 &15 &16 &21 &24 &28 &31 &36 &38\\
--, 3-generated
&0 &1 &19 &27 &51 &65 &99 &118 &162 &186 &240\\
\bottomrule
\end{tabular}
\label{tab_computed}
\end{table}

\def\cprime{$'$}

\end{document}